\newcommand{\field}[1]{\mathbb{#1}}
\newcommand{\N}{\field{N}}
\newcommand{\Z}{\field{Z}}
\newcommand{\R}{\field{R}}
\newcommand{\C}{\field{C}}
\newcommand{\PP}{P}
\newcommand{\bigT}{T}
\newcommand{\sgn}{\operatorname{sgn}}
\newcommand{\pr}{\operatorname{pr}}
\newcommand{\SL}{\operatorname{SL}}
\newcommand{\im}{\text{Im}}
\newcommand{\re}{\text{Re}}
\renewcommand{\H}{\mathbb{H}}
\newcommand{\QD}{\mathcal{Q}_D}
\newcommand{\fkD}{f_{k,D}}
\newcommand{\wt}{\kappa}
\newcommand{\tauz}{z}
\newcommand{\ztau}{\tau}
\newcommand{\Phipos}{\Phi_k}
\newcommand{\Phiposno}{\Phi}
\newcommand{\Phineg}{\Phi_{1-k}^*}
\newcommand{\Phinegno}{\Phi^*}
\newcommand{\phipos}{\varphi}
\newcommand{\phineg}{\varphi^*}
\newcommand{\Thpos}{\Theta}
\newcommand{\Thneg}{\Theta^*}
\numberwithin{equation}{section}
\newtheorem{theorem}{\textbf{Theorem}}
\numberwithin{theorem}{section}
\newtheorem{lemma}[theorem]{\textbf{Lemma}}
\newtheorem{proposition}[theorem]{\textbf{Proposition}}
\theoremstyle{remark}
\newtheorem*{remark}{Remark}
\newtheorem*{remarks}{Remarks}
\renewenvironment{proof}[1][Proof]{\begin{trivlist}
\item[\hskip \labelsep {\bfseries #1:}]}{\qed\end{trivlist}}
\begin{document}
\title{Theta lifts and local Maass forms}
\author{Kathrin Bringmann}
\address{Mathematical Institute\\University of Cologne\\ Weyertal 86-90 \\ 50931 Cologne \\Germany}
\email{kbringma@math.uni-koeln.de}
\author{Ben Kane}
\address{Mathematical Institute\\University of Cologne\\ Weyertal 86-90 \\ 50931 Cologne \\Germany}
\email{bkane@math.uni-koeln.de}
\author{Maryna Viazovska}
\address{Max Planck Institute for Mathematics\\ Vivatsgasse 7 \\ 53111 Bonn \\Germany}
\email{viazovsk@mpim-bonn.mpg.de}
\thanks{The research of the first author was supported by the Alfried Krupp Prize for Young University Teachers of the Krupp Foundation.}
\date{\today}
\keywords{theta lifts, harmonic weak Maass forms, locally harmonic Maass forms, local Maass forms, modular forms}
\subjclass[2010] {11F37,  11F27, 11F11, 11F25, 11E16}
\begin{abstract}
The first two authors and Kohnen have recently introduced a new class of modular objects called locally harmonic Maass forms, which are annihilated almost everywhere by the hyperbolic Laplacian operator.  In this paper, we realize these locally harmonic Maass forms as theta lifts of harmonic weak Maass forms.  Using the theory of theta lifts, we then construct examples of (non-harmonic) local Maass forms, which are instead eigenfunctions of the hyperbolic Laplacian almost everywhere.
\end{abstract}

\maketitle

\section{Introduction and statement of results}\label{sec:intro}

In \cite{BKW}, a new class of modular objects was introduced.  These functions, known as locally harmonic Maass forms, satisfy negative weight modularity and are annihilated almost everywhere by the hyperbolic Laplacian (see Section \ref{sec:harmonic} for the relevant definitions), mirroring harmonic weak Maass forms.  Recent interest in harmonic weak Maass forms initiated with their systematic treatment by Bruinier and Funke \cite{BF}.  Following their appearance in the theory of mock theta functions due to Zwegers \cite{ZwegersThesis}, it has been shown that harmonic weak Maass forms have applications ranging from partition theory (for example \cite{AndrewsMock,Bringmann,BGM,BOfq,BORank}) and Zagier's duality \cite{ZaDual} relating ``modular objects'' of different weights (for example \cite{BODual}) to derivatives of $L$-functions (for example \cite{BrO,BrY}).  They  also arise in mathematical physics, as recently evidenced in Eguchi, Ooguri, and Tachikawa's \cite{EOT} investigation of moonshine for the largest Mathieu group $M_{24}$.   The main difference between locally harmonic Maass forms and harmonic weak Maass forms is that there are certain geodesics along which locally harmonic weak Maass forms are not necessarily real analytic and may even exhibit discontinuities.  

In this paper, we realize the locally harmonic Maass forms studied in \cite{BKW} as theta lifts of harmonic weak Maass forms.  Theta lifts form connections between different types of modular objects and the regularization of Harvey--Moore \cite{HM} and Borcherds \cite{Bo98} allow one to extend their definitions to previously divergent theta integrals.  In particular, the Shimura lift \cite{Shimura} was realized as a theta lift by Niwa \cite{Niwa}.  Borcherds \cite{Bo98} later placed this into the framework of a larger family of theta lifts.  Following his work, theta lifts have more recently appeared in a variety of applications including generalized Kac--Moody algebras \cite{GritsenkoNikulin} and the arithmetic of Shimura varieties \cite{BrY}.  To expound upon one example, Katok and Sarnak \cite{KatokSarnak} used theta lifts to relate the central value of the $L$-series of a Maass cusp form to the Fourier coefficients of corresponding Maass cusp forms under the Shimura lift.  This extended a famous result of Waldspurger \cite{Waldspurger} proving that the central value of the $L$-function of an integral weight Hecke eigenform is proportional to the square of a coefficient of its half-integral weight counterpart under the Shintani lift.  Tunnell \cite{Tunnell} later exploited this link to express the central value of the $L$-function of an elliptic curve in terms of the coefficients of a theta function associated to a ternary quadratic form.  Tunnell's Theorem gives a solution to the ancient congruent number problem (conditional on the Birch and Swinnerton-Dyer conjecture).  

Following Bruinier's \cite{Bruinier} application of Borcherds lifts to harmonic weak Maass forms, Bruinier and Funke \cite{BF} extended theta lifts to harmonic weak Maass forms.    Due to the theory built around theta lifts, one may naturally extend the definition of locally harmonic Maass forms to include local Maass forms, i.e., functions with the above properties of locally harmonic Maass forms except that instead of being annihilated by the hyperbolic Laplacian, they are eigenfunctions.  The locally harmonic Maass forms investigated in \cite{BKW} have a natural connection to the Shimura \cite{Shimura} and Shintani \cite{Shintani} lifts, which we next describe.

For $k\in 2\N$ and a discriminant $D>0$, Zagier \cite{ZagierDN} defined the functions
\begin{equation}\label{eqn:fkDdef}
\fkD(\tauz):=\frac{D^{k-\frac{1}{2}}}{\binom{2k-2}{k-1}\pi}
\sum_{Q\in \QD} Q(\tauz,1)^{-k},
\end{equation}
where $\QD$ denotes the set of binary quadratic forms of discriminant $D\in \Z$.  Zagier showed that $\fkD\in S_{2k}$, the space of weight $2k$ cusp forms for $\SL_2\left(\Z\right)$ and it was later noticed that the $\fkD$ could be naturally realized as (linear combinations of) hyperbolic Poincar\'e series defined by Petersson \cite{Petersson}.  The functions $\fkD$ reappeared in the (holomorphic) kernel function for the Shimura and Shintani lifts
$$
\Omega(\tauz,\ztau):=\sum_{0<D\equiv 0,1\pmod{4}}  \fkD(\tauz) e^{2\pi i D\ztau}
$$
between $S_{2k}$ and $S_{k+\frac{1}{2}}^+$ (Kohnen's plus space of weight $k+\frac{1}{2}$ modular forms), which was defined by Kohnen and Zagier \cite{KohnenZagier}.  For $g\in S_{k+\frac{1}{2}}^+$, the Petersson inner product $\left< g,\Omega(-\overline{\tauz},\cdot)\right>$ equals $(-1)^{k/2}2^{2-3k}$ times the Shimura lift of $g$.  Kohnen and Zagier used $\Omega$ to explicitly compute the constant of proportionality in Waldspurger's result, in turn proving nonnegativity of the central $L$-values of Hecke eigenforms.

As indicated above, the functions $f_{k,D}$ may be interpreted in terms of theta lifts.  To describe this, we define Shintani's \cite{Shintani} non-holomorphic kernel function.  Throughout we write $\ztau=u+iv\in \H$, $\tauz=x+iy\in\H$, and denote for $Q=[a,b,c]\in \QD$
$$
Q_{\tauz}:=\frac{1}{y}\left(a|\tauz|^2+bx+c\right).
$$
Using this notation, Shintani's theta function projected into Kohnen's plus space equals
\begin{equation}\label{eqn:thetadef1}
\Thpos (\tauz,\ztau):=y^{-2k}v^{\frac12}\sum_{\substack{D\in \Z\\ Q\in\QD}}Q(\tauz,1)^k e^{-4\pi Q_{\tauz}^2 v}e^{2\pi i D\ztau}.
\end{equation}

The function $\Thpos\left(-\overline{\tauz},\ztau\right)$ transforms like a modular form of weight $k+\frac{1}{2}$ in $\ztau$ and weight $2k$ in $\tauz$ (see Proposition \ref{prop:Thetamodular} (1)).  Integrating the $D$-th weight $k+\frac{1}{2}$ (holomorphic) Poincar\'e series against $\Thpos$ yields $f_{k,D}$.  One can use Borcherds's \cite{Bo98} aforementioned regularized version $\left<f,g\right>^{\text{reg}}$ of the Petersson inner product (see Section \ref{sec:harmonic} for a definition) to extend the utility of the Shimura lift (realized as Niwa's \cite{Niwa} theta lift) to weak Maass forms.  To be more precise, for a weight $k+\frac{1}{2}$ weak Maass form $H$ with eigenvalue 
$$
\lambda_s:=\left(s-\frac{k}{2}-\frac{1}{4}\right)\left(1-s-\frac{k}{2}-\frac{1}{4}\right)
$$
 under the hyperbolic Laplacian $\Delta_{k+\frac{1}{2}}$, we define the theta lift
\[
\Phipos(H)(\tauz):= \left<H,\Thpos\left(\tauz,\cdot\right)\right>^{\text{reg}}.
\]
By choosing an appropriate input, this lift leads to the natural generalization
\begin{equation}\label{eqn:fkDsdef}
f_{k,s,D}(\tauz):=\sum_{Q\in \QD} Q(\tauz,1)^{-k}\phipos_s\left(\frac{Dy^2}{\left|Q\left(\tauz,1\right)\right|^2}\right)
\end{equation}
of $f_{k,D}$.  Here, for $0<w\leq 1$ and $\re(s)\geq \frac{k}{2}+\frac{1}{4}$, using the usual $_2F_1$ notation for Gauss's hypergeometric function, we define
$$
\phipos_s(w):=\frac{\Gamma\left(s+\frac{k}{2}-\frac{1}{4}\right) D^{\frac{k}{2}+\frac{1}{4}}}{6\Gamma(2s)\left(4\pi\right)^{\frac{k}{2}-\frac{1}{4}}} w^{s-\frac{k}{2}-\frac{1}{4}} {_2F_1}\left(s+\frac{k}{2}-\frac{1}{4},s-\frac{k}{2}-\frac{1}{4}; 2s;w\right),
$$
which is easily seen to be a constant when $s=\frac{k}{2}+\frac{1}{4}$.  Note that for $\re(s)>\frac{k}{2}+\frac{1}{4}$, the Euler integral representation of the ${_2F_1}$ (see \eqref{eqn:Eulerint}) yields
$$
\phipos_s(w)=\frac{\Gamma\left(s+\frac{k}{2}-\frac{1}{4}\right) D^{\frac{k}{2}+\frac{1}{4}} w^{s-\frac{k}{2}-\frac{1}{4}}}{6\Gamma\left(s+\frac{k}{2}+\frac{1}{4}\right)\Gamma\left(s-\frac{k}{2}-\frac{1}{4}\right)\left(4\pi\right)^{\frac{k}{2}-\frac{1}{4}}}\int_{0}^{1}(1-t)^{s+\frac{k}{2}-\frac{3}{4}}t^{s-\frac{k}{2}-\frac{5}{4}}\left(1-wt\right)^{-s-\frac{k}{2}+\frac{1}{4}}dt.
$$
In order to obtain the functions $f_{k,s,D}$, we apply the theta lift $\Phipos$ to the $D$-th Poincar\'e series $\PP_{k+\frac{1}{2},s,D}$ (see \eqref{eqn:Psdef}) of weight $k+\frac{1}{2}$ with eigenvalue $\lambda_s$ under $\Delta_{k+\frac{1}{2}}$ in Kohnen's plus space.  In the special case that $s=\frac{k}{2}+\frac{1}{4}$, this Poincar\'e series is precisely the classical cuspidal Poincar\'e series and $f_{k,\frac{k}{2}+\frac{1}{4},D}$ is essentially $f_{k,D}$ because $\phipos_{\frac{k}{2}+\frac{1}{4}}$ is a constant.  We next show that in general the functions $f_{k,s,D}$ are local Maass forms with exceptional set given by the closed geodesics
\begin{equation}\label{eqn:EDdef}
E_D:=\left\{ \tauz=x+iy\in \H: \exists a,b,c\in \Z,\ b^2-4ac=D,\ a\left|\tauz\right|^2+bx+c=0\right\}.
\end{equation}
\begin{theorem}\label{thm:poswt}
Suppose that $s\in \C$ satisfies $\re(s)\geq \frac{k}{2}+\frac{1}{4}$ and $D>0$ is a discriminant.  Then the following hold.

\noindent
\begin{enumerate}
\item  The function $f_{k,s,D}$ is a local Maass form of weight $2k$ and eigenvalue $4\lambda_s$ under $\Delta_{2k}$ with exceptional set $E_D$.  Moreover, 
\begin{equation}\label{eqn:fkDcusp}
f_{k,\frac{k}{2}+\frac{1}{4},D}=\frac{2^{2k-3}}{3(2k-1)}\left(4\pi D\right)^{\frac{3}{4}-\frac{k}{2}} f_{k,D},
\end{equation}
which is a cusp form.
\item
The theta lift $\Phipos$ maps weight $k+\frac{1}{2}$ weak Maass forms with eigenvalue $\lambda_s$ under $\Delta_{k+\frac{1}{2}}$ to weight $2k$ local Maass forms with eigenvalue $4\lambda_s$ under $\Delta_{2k}$.  In particular, the image of the $D$-th Poincar\'e series under the theta lift $\Phipos$ equals
$$
\Phipos\left(\PP_{k+\frac{1}{2},s,D}\right) = f_{k,s,D}.
$$
\end{enumerate}
\end{theorem}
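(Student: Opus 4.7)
The plan is to first establish part (2) by an explicit unfolding computation, identifying the image of $\PP_{k+\frac{1}{2},s,D}$ under $\Phipos$ with $f_{k,s,D}$, and then to derive both parts of (1) from the transformation law and eigenvalue behavior of $\Thpos$ in the variable $\tauz$, as recorded in Proposition \ref{prop:Thetamodular}.

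For part (2), I would write $\PP_{k+\frac{1}{2},s,D}$ as the plus-space projection of a Poincar\'e series built from a Whittaker-type seed proportional to $\mathcal{M}_s(4\pi Dv)\e^{2\pi i Du}$, summed over $\Gamma_\infty\backslash\Gamma_0(4)$. Inserting this into $\langle\PP_{k+\frac{1}{2},s,D},\Thpos(\tauz,\cdot)\rangle^{\text{reg}}$ and unfolding, the quotient collapses to the strip $[0,1]\times(0,\infty)$, and the $u$-integral extracts the $D$-th $\ztau$-Fourier coefficient of $\Thpos(\tauz,\ztau)$, namely
$$y^{-2k}v^{\frac{1}{2}}\sum_{Q\in\QD}Q(\tauz,1)^k \e^{-4\pi Q_{\tauz}^2 v},$$
read off from \eqref{eqn:thetadef1}. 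Using the elementary identity $|Q(\tauz,1)|^2 = y^2 Q_{\tauz}^2 + Dy^2$ (a direct calculation from $b^2-4ac=D$), the remaining $v$-integral becomes a Laplace--Mellin transform of $\mathcal{M}_s$. A standard evaluation recasts it as the Euler integral representation of $\phipos_s\!\left(Dy^2/|Q(\tauz,1)|^2\right)$ up to the factor $Q(\tauz,1)^{-k}$ and an explicit $Q$-independent constant matching the normalization in \eqref{eqn:fkDsdef}; summing over $Q\in\QD$ then yields $\Phipos(\PP_{k+\frac{1}{2},s,D})=f_{k,s,D}$. The principal technical obstacle here is regularization: for $\re(s)>\frac{k}{2}+\frac{1}{4}$ the seed decays only polynomially, so the unfolding must be justified via the Borcherds--Harvey--Moore height-$T$ cutoff together with a meromorphic continuation argument in $s$.

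Given part (2), part (1) follows almost formally. The weight $2k$ modularity of $f_{k,s,D}$ in $\tauz$ is inherited directly from the corresponding property of $\Thpos(\tauz,\cdot)$ in Proposition \ref{prop:Thetamodular}. The eigenvalue assertion relies on the Laplacian intertwining identity $\Delta_{2k,\tauz}\Thpos = 4\Delta_{k+\frac{1}{2},\ztau}\Thpos$, valid pointwise off $E_D$ and verifiable by a direct derivative computation on \eqref{eqn:thetadef1}; combined with integration by parts in the inner product, this propagates the $\Delta_{k+\frac{1}{2}}$-eigenvalue $\lambda_s$ of the Poincar\'e series to a $\Delta_{2k}$-eigenvalue $4\lambda_s$ of $f_{k,s,D}$ away from $E_D$. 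The exceptional set appears naturally from the identity above: $Dy^2/|Q(\tauz,1)|^2\leq 1$, with equality exactly when $Q_{\tauz}=0$, that is, on $E_D$, and this is precisely the branch locus of the hypergeometric ${_2F_1}$ inside $\phipos_s$, where the theta kernel fails to be smooth. Finally, \eqref{eqn:fkDcusp} is obtained by specializing $s=\frac{k}{2}+\frac{1}{4}$: there ${_2F_1}\!\left(k,0;k+\frac{1}{2};w\right)=1$ trivially, and a short Gamma-function manipulation using $\Gamma\!\left(k+\frac{1}{2}\right)=(2k)!\sqrt{\pi}/(4^k k!)$ reduces the overall constant to $\frac{2^{2k-3}}{3(2k-1)}(4\pi D)^{\frac{3}{4}-\frac{k}{2}}$, matching the normalization of $\fkD$ in \eqref{eqn:fkDdef}.
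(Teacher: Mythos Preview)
Your treatment of part (2) matches the paper's: unfold to the strip, pick off the $D$-th Fourier coefficient of $\Thpos$, use $|Q(\tauz,1)|^2=y^2Q_\tauz^2+Dy^2$, and evaluate the remaining $v$-integral via the integral representation \eqref{eqn:Mintrep} of the $M$-Whittaker function to reach the Euler integral for ${_2F_1}$. The paper handles regularization by Zagier's method (splitting $\Gamma_\infty\backslash\H_T$ into the truncated strip minus horodiscs at the cusps and showing the horodisc contribution vanishes); your cutoff-plus-continuation is an acceptable variant of the same mechanism.

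The substantive divergence is in the eigenvalue step of part (1). You invoke a direct intertwining $\Delta_{2k,\tauz}\Thpos=4\Delta_{k+\frac{1}{2},\ztau}\Thpos$ together with integration by parts. The paper neither states nor proves such an identity, and it does not follow from what the paper does establish: Lemma \ref{lem:xi1} relates $\xi_{k+\frac{1}{2},\ztau}\Thpos$ to $\partial_\tauz\Thneg$ and $\xi_{\frac{3}{2}-k,\ztau}\Thneg$ to $\partial_\tauz\Thpos$, so composing $\xi$-operators in $\ztau$ necessarily passes through the \emph{companion} kernel $\Thneg$, not back to $\Thpos$. Accordingly, the paper obtains the $\Delta_{2k}$-eigenvalue of $f_{k,s,D}$ only after first proving Theorem \ref{thm:negposrel}, i.e.\ $\xi_{2k}f_{k,s,D}\propto\mathcal{F}_{1-k,\bar s,D}$ and $\xi_{2-2k}\mathcal{F}_{1-k,s,D}\propto f_{k,\bar s,D}$, and then composing --- the eigenvalue is deduced by bouncing off the negative-weight lift $\Phineg$. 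Your claimed Laplacian identity on $\Thpos$ alone would require an independent computation you have not supplied, and even granting it, moving $\Delta_{k+\frac{1}{2},\ztau}$ across the regularized pairing needs two rounds of the Stokes boundary control carried out in Lemma \ref{lem:xiTh}. The paper's route is longer but produces the full commutative diagram relating $\Phipos$, $\Phineg$, and the $\xi$-operators as a byproduct; yours would be more self-contained if the identity can be checked.

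Two smaller points. The paper shows that $f_{k,s,D}$ is actually \emph{continuous} across $E_D$ (since $C-A-B=\frac{1}{2}>0$ makes ${_2F_1}$ continuous at $w=1$), so condition (3) in the definition of local Maass form holds trivially; it is differentiability, not continuity, that fails on $E_D$, so describing $E_D$ as the ``branch locus where the kernel fails to be smooth'' is slightly off. You also do not address the polynomial-growth condition (4), which the paper handles by a direct estimate.
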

\begin{remark}
The function $f_{k,s,D}$ is continuous for every $\re(s)\geq \frac{k}{2}+\frac{1}{4}$, but whenever $\lambda_s\neq 0$ there exist points along $E_D$ along which $f_{k,s,D}$ is not differentiable.  In particular, one should note the astonishing fact that while the functions are not differentiable for $\lambda_s\neq 0$, the case $\lambda_s=0$ yields a (holomorphic) cusp form by \eqref{eqn:fkDcusp}. 
\end{remark}
We now investigate the general properties of the theta lift.  Let $T_p$ and $T_p^2$ denote the Hecke operators of integral and half-integral weight, respectively (see \eqref{eqn:Tpdef} and \eqref{eqn:Tp2def}).  We next show that the theta lift commutes with the Hecke operators.
\begin{theorem}\label{thm:Heckepos}

\noindent
\begin{enumerate}
\item
For every weight $k+\frac{1}{2}$ weak Maass form $H$ with eigenvalue $\lambda_s$ with $\re(s)\geq \frac{k}{2}+\frac{1}{4}$
$$
\Phipos(H)\Big|_{2k} T_p=\Phipos\left(H\Big|_{k+\frac{1}{2}} T_{p^2}\right).
$$
\item
If $\re(s)\geq \frac{k}{2}+\frac{1}{4}$ and $s\neq \frac{k}{2}+\frac{1}{4}$, then the lift $\Phipos$ is injective on the space of weak Maass forms with eigenvalue $\lambda_s$ under $\Delta_{k+\frac{1}{2}}$.
\end{enumerate}
\end{theorem}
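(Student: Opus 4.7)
My plan for part (1) is to reduce the Hecke commutation to a Hecke compatibility identity for the theta kernel itself, namely
\begin{equation*}
\Thpos(\tauz,\ztau)\big|_{2k,\tauz} T_p \;=\; \Thpos(\tauz,\ztau)\big|_{k+\frac{1}{2},\ztau} T_{p^2}.
\end{equation*}
This is the analogue of the classical Shimura--Shintani kernel identity, which I would verify by expanding both sides over $\QD$ and matching the coefficients of each Fourier mode in $\ztau$: the operator $T_{p^2}$ in $\ztau$ regroups the summands indexed by $D$ and $p^{\pm 2}D$, while $T_p$ in $\tauz$ sums $\Thpos$ over weight-$2k$ coset representatives, and the two rearrangements coincide via the standard bijection between sublattices of index $p$ and $T_{p^2}$ coset representatives acting on $\QD$. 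Granting this kernel identity, part (1) follows by inserting it into the definition of $\Phipos$ and using the self-adjointness of $T_{p^2}$ against $\langle\cdot,\cdot\rangle^{\mathrm{reg}}$:
\begin{equation*}
\Phipos(H)\big|_{2k}T_p \;=\; \langle H,\Thpos\big|_{2k,\tauz} T_p\rangle^{\mathrm{reg}} \;=\; \langle H,\Thpos\big|_{k+\frac{1}{2},\ztau} T_{p^2}\rangle^{\mathrm{reg}} \;=\; \Phipos\bigl(H\big|_{k+\frac{1}{2}} T_{p^2}\bigr).
\end{equation*}
The main technical obstacle here will be the compatibility of the Borcherds regularization with $T_{p^2}$, which amounts to checking that the cut-off boundary terms used to define $\langle\cdot,\cdot\rangle^{\mathrm{reg}}$ still vanish after applying the Hecke operator; this in turn reduces to controlling the growth of $H\big|_{k+\frac{1}{2}}T_{p^2}$ at the cusp in terms of that of $H$.

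For part (2), the plan is to translate injectivity into linear independence of the images $f_{k,s,D}$, which follows from the exceptional-set description. Suppose $H$ is a weight $k+\frac{1}{2}$ weak Maass form of eigenvalue $\lambda_s$ with $\Phipos(H)=0$. The hypothesis $s\neq\frac{k}{2}+\frac{1}{4}$ forces $\lambda_s\neq 0$, so by the remark after Theorem \ref{thm:poswt} each $f_{k,s,D}$ is non-smooth along a non-empty subset of $E_D$. I would first match the Fourier principal part of $H$ at the cusp by a linear combination $\sum_D c_D \PP_{k+\frac{1}{2},s,D}$ of Maass Poincar\'e series of the same weight and eigenvalue, so that $E:=H-\sum_D c_D \PP_{k+\frac{1}{2},s,D}$ is a Maass cusp form. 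By Theorem \ref{thm:poswt}(2),
\begin{equation*}
0 \;=\; \Phipos(H) \;=\; \sum_D c_D\, f_{k,s,D} + \Phipos(E).
\end{equation*}
Since the closed geodesics $E_{D_1}$ and $E_{D_2}$ are distinct for $D_1\neq D_2$, evaluating near a non-smooth point of $f_{k,s,D_0}$ on $E_{D_0}$ that avoids every other $E_D$ isolates its singular contribution and forces $c_{D_0}=0$; the remaining $\Phipos(E)$ is smooth everywhere as the lift of a cusp form.

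The remaining and main difficulty is ruling out a nonzero cuspidal Maass form $E$ in the kernel of $\Phipos$: for this I would use the Hecke equivariance from part (1) together with multiplicity one to reduce to Hecke eigenforms, and then appeal to the non-vanishing of the associated Shimura-type lift in the regime $\lambda_s\neq 0$, which is the cuspidal analogue of the Katok--Sarnak identity. The hypothesis $s\neq\frac{k}{2}+\frac{1}{4}$ enters crucially both in the singular-set argument above (where $\lambda_s\neq 0$ is needed for non-smoothness along $E_D$) and in avoiding the degenerate case \eqref{eqn:fkDcusp}, in which $f_{k,s,D}$ collapses to a holomorphic cusp form and the singular-set isolation breaks down.
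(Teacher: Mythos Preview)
Your approach to both parts is valid in outline but differs from the paper's, which never touches the kernel identity or the singular geometry of the $f_{k,s,D}$ directly; instead it reduces everything to the already-proven negative-weight Theorem~\ref{thm:Heckeneg} via the $\xi$-intertwining relation \eqref{eqn:xiFs2}. For part~(1), the paper lifts $H$ to a weight $\frac{3}{2}-k$ preimage $G$ under $\xi_{\frac{3}{2}-k}$ and then chains \eqref{eqn:xiFs2}, the Hecke relation \eqref{eqn:Hecke} for $\Phineg$, and the commutation of $\xi$ with Hecke operators; your kernel identity $\Thpos|_{2k,\tauz}T_p=\Thpos|_{k+\frac{1}{2},\ztau}T_{p^2}$ is the classical Shintani--Niwa alternative and buys independence from the negative-weight theory at the cost of verifying the regularized self-adjointness you flag. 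For part~(2), the paper sets $G:=-(4\lambda_s)^{-1}\xi_{k+\frac{1}{2}}(F)$, uses \eqref{eqn:xiFs2} to obtain $\xi_{2-2k}(\Phineg(G))=0$, observes that a $\Delta_{2-2k}$-eigenfunction with nonzero eigenvalue killed by $\xi_{2-2k}$ must vanish, and invokes the injectivity of $\Phineg$ (Theorem~\ref{thm:Heckeneg}(2)) to conclude $G=0$ and hence $F=0$.

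Your ``main difficulty'' in part~(2)---the residual cuspidal piece $E$---is not actually present in this paper's setup: Section~\ref{sec:harmonic} records that the Poincar\'e series span the weak Maass forms of fixed eigenvalue and are linearly independent via their principal parts, so matching principal parts already forces $E=0$. The multiplicity-one detour and your unjustified assertion that $\Phipos(E)$ is smooth for cuspidal $E$ can both be dropped. What then remains of your argument---linear independence of the $f_{k,s,D}$ detected by non-differentiability along $E_D$---is precisely the injectivity proof for $\Phineg$ (via discontinuities of the $\mathcal{F}_{1-k,s,D}$) transported through \eqref{eqn:xi2k}, which is exactly what the paper's reduction accomplishes, only packaged more cleanly.
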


We next describe a theta lift which parallels the construction of Shintani \cite{Shintani} and Niwa \cite{Niwa} in negative weight.
Define the following theta function
\begin{equation}\label{eqn:thetadef2}
\Thneg(\tauz,\ztau):=v^k \sum_{\substack{D\in \Z\\ Q\in \QD}}Q_{\tauz}Q(\tauz,1)^{k-1} e^{-\frac{4\pi |Q(\tauz,1)|^2 v}{y^2}}  e^{-2\pi i D\ztau}.
\end{equation}
The function $\Thneg$ transforms like a modular form of weight $\frac32-k$ in $\ztau$ and weight $2-2k$ in $\tauz$ (see Proposition \ref{prop:Thetamodular} (2)).  Similar to the positive weight case, for a weak Maass form $H$ of weight $\frac{3}{2}-k$, we define the theta lift by
$$
\Phineg (H)(\tauz):=\left<H,\Thneg\left(-\bar{\tauz},\cdot\right)\right>^{\text{reg}}.
$$
Since the space of weak Maass forms is spanned by the Poincar\'e series  $\PP_{\frac{3}{2}-k,s,D}$ (defined in \eqref{eqn:Psdef}), it suffices to consider their image under the theta lifting.  This leads to the definition 
\begin{equation}\label{eqn:F1-kdef}
\mathcal{F}_{1-k,s,D}\left(\tauz\right) :=\sum_{Q\in \QD}\sgn\left(Q_{\tauz}\right)Q(\tauz,1)^{k-1}\phineg_s\left(\frac{Dy^2}{\left|Q(\tauz,1)\right|^2}\right),
\end{equation}
where, for $0<w\leq 1$ and $s\in \C$ with $\re(s)\geq \frac{k}{2}-\frac{3}{4}$, we define
$$
\phineg_s(w):=\frac{\Gamma\left(s+\frac{k}{2}-\frac14\right)(4\pi D)^{\frac{3}{4}-\frac{k}{2}}}{12\sqrt{\pi} \Gamma(2s)} w^{\frac{k}{2}-\frac{3}{4}+s}{_2F_1}\left(s-\frac{k}{2}+\frac{1}{4}, s+\frac{k}{2}-\frac{3}{4};2s;w\right).
$$
The Euler integral representation \eqref{eqn:Eulerint} again implies that 
$$
\phineg_s(w)=\frac{\Gamma\left(s+\frac{k}{2}-\frac14\right) (4\pi D)^{\frac{3}{4}-\frac{k}{2}}}{12\sqrt{\pi}\Gamma\left(s+\frac{k}{2}-\frac{3}{4}\right)\Gamma\left(s-\frac{k}{2}+\frac{3}{4}\right)}w^{\frac{k}{2}-\frac{3}{4}+s}\int_{0}^{1}t^{s+\frac{k}{2}-\frac{7}{4}}(1-t)^{s-\frac{k}{2}-\frac{1}{4}}\left(1-wt\right)^{-s+\frac{k}{2}-\frac{1}{4}}dt.
$$
  In the special case that $s=\frac{k}{2}+\frac{1}{4}$, a change of variables yields the locally harmonic Maass form
$$
\mathcal{F}_{1-k,D}\left(\tauz\right):=\frac{1}{12\psi(1)}\left(4\pi D\right)^{\frac{3}{4}-\frac{k}{2}}\sum_{Q\in \QD}\sgn\left(Q_{\tauz}\right) Q\left(\tauz,1\right)^{k-1} \psi\left(\frac{Dy^2}{\left|Q\left(\tauz,1\right)\right|^2_{\phantom{-}}}\right),
$$
investigated in \cite{BKW}.  Here
$$
\psi\left(v\right):=\frac{1}{2}\beta\left(v;k-\frac{1}{2},\frac{1}{2}\right)
$$
is a special value of the incomplete $\beta$-function, which is defined for $r,s\in \C$ satisfying $\re\left(r\right)$, $\re\left(s\right)>0$ by $\beta\left(v;s,r\right):=\int_{0}^v u^{s-1}\left(1-u\right)^{r-1}du$.  In \cite{BKW}, the first two authors and Kohnen introduced the functions $\mathcal{F}_{1-k,D}$ and showed that they transform like weight $2-2k$ modular forms and are locally harmonic in every neighborhood of $\H$ which does not intersect $E_D$.  More generally, the functions $\mathcal{F}_{1-k,s,D}$ are local Maass forms with exceptional set $E_D$.
\begin{theorem}\label{thm:negwt}
Suppose that $k$ is even, $D>0$ is a discriminant, and $s\in \C$ satisfies $\re(s)\geq \frac{k}{2}-\frac{3}{4}$.  Then the following hold.

\noindent
\begin{enumerate}
\item
The function $\mathcal{F}_{1-k,s,D}$ is a local Maass form of weight $2-2k$ with eigenvalue $4\lambda_s$ under $\Delta_{2-2k}$ and exceptional set $E_D$.
\item
The theta lift $\Phineg$ maps weight $\frac{3}{2}-k$ weak Maass forms with eigenvalue $\lambda_s$ under $\Delta_{\frac{3}{2}-k}$ to weight $2-2k$ local Maass forms with eigenvalue $4\lambda_s$ under $\Delta_{2-2k}$.  In particular, the image of $\PP_{\frac{3}{2}-k,s,D}$ under the theta lift is
\begin{equation}\label{eqn:thetalift1}
\Phineg\left( \PP_{\frac{3}{2}-k,s,D}\right) =\mathcal{F}_{1-k,s,D}.
\end{equation}
\end{enumerate}
\end{theorem}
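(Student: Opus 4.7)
The plan is to mirror the strategy of Theorem \ref{thm:poswt}: since the Poincar\'e series $\PP_{\frac{3}{2}-k,s,D}$ span the space of weight $\frac{3}{2}-k$ weak Maass forms with eigenvalue $\lambda_s$, establishing the identity \eqref{eqn:thetalift1} will simultaneously prove the second assertion of (2) and, combined with the modular and spectral properties of $\Thneg$, yield both part (1) and the general statement of (2).

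The heart of the argument is the explicit evaluation of $\Phineg\left(\PP_{\frac{3}{2}-k,s,D}\right)$ by unfolding. Writing the Poincar\'e series as a sum over cosets of $\Gamma_{\infty}$ of a seed function and unfolding the regularized Petersson pairing
\[
\left<\PP_{\frac{3}{2}-k,s,D},\Thneg\left(-\overline{\tauz},\cdot\right)\right>^{\text{reg}}
\]
reduces it to an integral over the strip $0\le u<1$, $v>0$ against $v^{-2}du\,dv$. The $u$-integration isolates, by orthogonality of the characters $e^{2\pi inu}$, the Fourier coefficient of $\Thneg\left(-\overline{\tauz},\cdot\right)$ of frequency $D$, producing a sum over $Q\in\QD$ together with the factor $v^{k}e^{-4\pi|Q(\tauz,1)|^{2}v/y^{2}}$ coming from $\Thneg$. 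The remaining $v$-integral is a classical Laplace-type integral against the Whittaker seed of $\PP_{\frac{3}{2}-k,s,D}$; after an appropriate change of variables it evaluates, via the Euler integral representation \eqref{eqn:Eulerint} of ${}_{2}F_{1}$, to precisely $\phineg_{s}\left(Dy^{2}/|Q(\tauz,1)|^{2}\right)$. The sign $\sgn(Q_\tauz)$ emerges from the linear factor $Q_\tauz$ inside $\Thneg$ once the remaining dependence on $Q$ is organized through the absolute value $|Q(\tauz,1)|^{2}$.

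With the explicit formula in hand, part (1) follows from three inputs. Modularity in weight $2-2k$ is inherited from the modularity of $\Thneg$ in $\tauz$ (Proposition \ref{prop:Thetamodular}(2)), which passes through the regularized integral. The eigenvalue assertion will follow from an intertwining identity on the theta kernel relating $\Delta_{2-2k,\tauz}\Thneg$ and $\Delta_{\frac{3}{2}-k,\ztau}\Thneg$, after which integrating by parts twice in $\ztau$ transfers the Laplacian to $H$ and produces the factor $4\lambda_s$. Finally, the exceptional set $E_D$ arises because $Q_\tauz$ vanishes precisely on the geodesic attached to $Q$, so each summand of $\mathcal{F}_{1-k,s,D}$ acquires jumps in its derivatives exactly across $E_D$ as defined in \eqref{eqn:EDdef}.

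The main obstacle will be the regularization. Since $\Thneg$ grows polynomially in $v$ as $v\to\infty$ while the Poincar\'e seed decays only like $e^{-2\pi Dv}$, the pairing is not absolutely convergent, and the Harvey--Moore/Borcherds cut-off is required. The delicate step will be justifying the interchange of the cut-off limit with the $u$- and $v$-integrations and with the action of $\Delta_{2-2k,\tauz}$; the cleanest route is to first perform the calculation for $\re(s)$ large enough that, after unfolding, the Gaussian decay from $\Thneg$ dominates and everything converges absolutely, and then extend to the full range $\re(s)\ge\frac{k}{2}-\frac{3}{4}$ by analytic continuation in $s$.
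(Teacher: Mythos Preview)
Your unfolding strategy for part (2) matches the paper's, but two details are off. First, the growth is reversed from what you state: the seed $\psi_{-D,\frac{3}{2}-k}(s;\ztau)$ \emph{grows} like $e^{2\pi Dv}$ (by \eqref{eqn:Whittasympt}, since $\sgn(\wt)=-1$ here), while the $D$-th Fourier coefficient of $\Thneg$ \emph{decays} like $e^{-4\pi Q_{\tauz}^2 v-2\pi Dv}$ once you use \eqref{eqn:Qrewrite}; the net integrand decays exactly when $\tauz\notin E_D$, and this is what makes the unfolded $v$-integral converge. Second, ``$\sgn(Q_{\tauz})$ emerges'' is too quick: after the $v$-integral you are left with $Q_{\tauz}Q(\tauz,1)^{k-1}$ times a hypergeometric in $w=Dy^2/|Q(\tauz,1)|^2$, and converting $Q_{\tauz}$ into $\sgn(Q_{\tauz})$ times a function of $w$ alone requires the Euler transform ${_2F_1}(A,B;C;w)=(1-w)^{C-A-B}{_2F_1}(C-A,C-B;C;w)$ together with $|Q_{\tauz}|=y^{-1}|Q(\tauz,1)|(1-w)^{1/2}$ from \eqref{eqn:Qrewrite}. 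The paper carries this out explicitly.

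For the eigenvalue in part (1) your route diverges from the paper's. You propose a direct intertwining $\Delta_{2-2k,\tauz}\Thneg\leftrightarrow\Delta_{\frac{3}{2}-k,\ztau}\Thneg$ and two integrations by parts. The paper instead proves Theorem~\ref{thm:negposrel} first, i.e.\ $\xi_{2-2k}(\mathcal{F}_{1-k,s,D})=2(\overline{s}-\tfrac{3}{4}+\tfrac{k}{2})f_{k,\overline{s},D}$ and $\xi_{2k}(f_{k,s,D})=2(\overline{s}-\tfrac{k}{2}-\tfrac{1}{4})\mathcal{F}_{1-k,\overline{s},D}$, and then composes to get $\Delta_{2-2k}\mathcal{F}_{1-k,s,D}=4\lambda_s\mathcal{F}_{1-k,s,D}$; this makes the factor $4$ transparent (two factors of $2$ from the two $\xi$-steps) and passes through the companion lift $f_{k,s,D}$. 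Your direct approach is viable in principle, but you would need to (i) actually state and prove the Laplacian identity on $\Thneg$ with the correct constant $4$---Lemma~\ref{lem:xi1} only gives $\xi$-identities, so you must compose them carefully---and (ii) control the boundary terms from Stokes, which is exactly Lemma~\ref{lem:xiTh} and is where the hypothesis $\tauz\notin E_D$ is genuinely used. As written, neither point is addressed.

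Finally, you have not checked condition~(3) of the definition of a local Maass form (the two-sided averaging condition along $E_D$); ``jumps in derivatives'' is not the same statement. The paper verifies it by pulling the limit inside the locally uniformly convergent sum and using that $\sgn(Q_{\tauz+ir})=-\sgn(Q_{\tauz-ir})$ for the finitely many $Q$ with $Q_{\tauz}=0$, so those terms cancel in the average. You should also record the polynomial growth at $i\infty$ (condition~(4)).
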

\begin{remarks}
\noindent

\noindent
\begin{enumerate}
\item
The functions $\mathcal{F}_{1-k,s,D}$ are never continuous.  That is to say, for every $s$ and $D$ satisfying the conditions of Theorem \ref{thm:negwt}, there exist points along $E_D$ for which $\mathcal{F}_{1-k,s,D}$ exhibits discontinuities.  
\item
Although $\mathcal{F}_{1-k,D}$ is never continuous, one may add a piecewise polynomial function to obtain a real analytic function.  The polynomial in question is related to the period polynomial of $f_{k,D}$ and was thoroughly investigated in \cite{BKW}. 
\item
In the omitted case $k=1$ and $\lambda_s=0$, H\"ovel \cite{Hovel} has constructed locally harmonic Maass forms via a theta lift.  The relationship with the Shimura and Shintani lifts as well as its geometric interpretation were further investigated there.  
\item
The regularized theta lifts considered here should also have a geometric interpretation.  One expects that their images represent cohomology classes of geodesic cycles as currents.
\end{enumerate}
\end{remarks}
We again turn to the general properties of this theta lift.  In particular, it also commutes with the Hecke operators.
\begin{theorem}\label{thm:Heckeneg}
Suppose that $s\in \C$ satisfies $\re(s)\geq \frac{k}{2}-\frac{3}{4}$.
The following hold.

\noindent
\begin{enumerate}
\item
For every weak Maass form $H$ of weight $\frac{3}{2}-k$ in Kohnen's plus space with eigenvalue $\lambda_s$ under $\Delta_{\frac{3}{2}-k}$, one has
\begin{equation}\label{eqn:Hecke}
\Phineg(H)\Big|_{2-2k} T_p=\Phineg\left(H\Big|_{\frac32-k} T_{p^2}\right).
\end{equation}
\item
The lift $\Phineg$ is injective on the space of weak Maass forms with eigenvalue $\lambda_s$ under $\Delta_{\frac{3}{2}-k}$.
\end{enumerate}
\end{theorem}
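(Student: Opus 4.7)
The plan is to prove both parts by mirroring the strategy of Theorem \ref{thm:Heckepos}, reducing each statement to properties of the theta kernel $\Thneg$ together with standard facts about weak Maass forms and their Poincar\'e series.

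For part (1), I would first establish the kernel identity
$$
\Thneg\left(-\overline{\tauz},\ztau\right)\Big|_{2-2k,\tauz}T_p = \Thneg\left(-\overline{\tauz},\ztau\right)\Big|_{\frac{3}{2}-k,\ztau}T_{p^2},
$$
where the subscripts indicate the weight and the variable on which each Hecke operator acts. Expanding the left-hand side via the definition \eqref{eqn:thetadef2}, the weight $2-2k$ slash unfolds to a sum over representatives in $\SL_2(\Z)$ of integer matrices of determinant $p$. The substitution $Q\mapsto Q\circ\gamma$ then provides a bijection between binary quadratic forms of discriminant $D$ appearing on one side and those of discriminant $p^2 D$ on the other, and one checks by direct calculation that $\sgn(Q_\tauz)$ and $Q_\tauz Q(\tauz,1)^{k-1}$ transform correctly under this substitution; this is the negative-weight analogue of the kernel identity behind Theorem \ref{thm:Heckepos}(1). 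Once the kernel identity is in place, substituting into the definition of $\Phineg(H)$ and using self-adjointness of $T_{p^2}$ for the regularized Petersson pairing yields
$$
\Phineg(H)\Big|_{2-2k}T_p = \left\langle H\Big|_{\frac{3}{2}-k}T_{p^2},\Thneg\left(-\overline{\tauz},\cdot\right)\right\rangle^{\text{reg}} = \Phineg\left(H\Big|_{\frac{3}{2}-k}T_{p^2}\right),
$$
which is the assertion of \eqref{eqn:Hecke}.

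For part (2), I would use that the space of weight $\frac{3}{2}-k$ weak Maass forms in Kohnen's plus space with eigenvalue $\lambda_s$ is spanned by the Poincar\'e series $\PP_{\frac{3}{2}-k,s,D}$ indexed by discriminants $D>0$. By \eqref{eqn:thetalift1} the images of these spanning elements under $\Phineg$ are the local Maass forms $\mathcal{F}_{1-k,s,D}$, so injectivity reduces to linear independence of $\{\mathcal{F}_{1-k,s,D}\}_{D>0}$. By Remark 1 following Theorem \ref{thm:negwt}, each $\mathcal{F}_{1-k,s,D}$ exhibits genuine discontinuities along points of the closed geodesic set $E_D$ defined in \eqref{eqn:EDdef}, and the sets $E_D$ are pairwise distinct for different positive discriminants $D$. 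Hence a finite linear relation $\sum_D a_D\mathcal{F}_{1-k,s,D}=0$ would force cancellation of discontinuities along a geodesic where only one summand is actually discontinuous, so $a_D=0$ for each $D$.

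The main obstacle will be the Hecke kernel identity for $\Thneg$. Although the correspondence between quadratic forms of discriminants $D$ and $p^2 D$ is classical, the half-integral weight operator $T_{p^2}$ involves quadratic residue symbols and weight-dependent power factors that must be matched precisely against the classical weight $2-2k$ Hecke operator, and the appearance of $-\overline{\tauz}$ rather than $\tauz$ in the theta kernel introduces several sign flips that need careful bookkeeping. Self-adjointness of $T_{p^2}$ for the regularized pairing follows from the growth estimates on $\Thneg$ already required to define $\Phineg$, and the spanning of weak Maass forms by Poincar\'e series is standard via the principal part expansion and meromorphic continuation in $s$.
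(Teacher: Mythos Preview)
Your argument for part (2) is essentially the paper's: reduce to linear independence of the $\mathcal{F}_{1-k,s,D}$ via \eqref{eqn:thetalift1}, then distinguish them by their discontinuity loci $E_D$. One small refinement: the sets $E_D$ are not pairwise disjoint (indeed $E_D\subset E_{Dp^2}$), so to isolate a single coefficient you should pick a geodesic coming from a \emph{primitive} form of the maximal discriminant among the $D_j$ in your relation, which then lies in exactly one $E_{D_j}$. The paper is equally terse on this point.

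For part (1) your route differs from the paper's. You propose a kernel identity $\Thneg|_{2-2k,\tauz}T_p=\Thneg|_{\frac{3}{2}-k,\ztau}T_{p^2}$ combined with self-adjointness of $T_{p^2}$ for the regularized pairing. The paper instead works directly on the spanning set of Poincar\'e series: it quotes from \cite{BKW} the explicit identity
\[
\mathcal{F}_{1-k,s,D}\Big|_{2-2k}T_p=\mathcal{F}_{1-k,s,Dp^2}+p^{-k}\left(\tfrac{D}{p}\right)\mathcal{F}_{1-k,s,D}+p^{1-2k}\mathcal{F}_{1-k,s,D/p^2},
\]
verifies the matching identity for $\PP_{\frac{3}{2}-k,s,D}\big|T_{p^2}$ directly from \eqref{eqn:Tp2def}, and concludes by \eqref{eqn:thetalift1}. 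Your kernel approach is the classical Niwa--Shintani strategy and should succeed, but two points deserve attention. First, your framing ``mirroring Theorem \ref{thm:Heckepos}'' is inverted relative to the paper's logic: there Theorem \ref{thm:Heckepos} is \emph{deduced from} \eqref{eqn:Hecke} via the $\xi$-relations \eqref{eqn:xiFs2} and \eqref{eqn:Phi1-kPhik}, so Theorem \ref{thm:Heckeneg} must be established first and independently. Second, self-adjointness of $T_{p^2}$ for $\langle\cdot,\cdot\rangle^{\text{reg}}$ is not automatic---the regularized pairing is a limit of truncated integrals and one argument is a non-decaying theta kernel---so this step requires an actual argument rather than an appeal to growth estimates. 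The paper's approach sidesteps both issues at the cost of importing the quadratic-form Hecke computation from \cite{BKW}.
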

\begin{remark}
In \cite{BKW}, it was shown that the functions $\mathcal{F}_{1-k,D}$ satisfy relations under the Hecke operators which seemed to imply a natural connection to weight $\frac{3}{2}-k$ objects.  This is explained by the relation \eqref{eqn:Hecke} between integral and half-integral weight Hecke operators.
\end{remark}

The images $\mathcal{F}_{1-k,s,D}$ and $f_{k,s,D}$ under the two theta lifts considered in this paper are related through the antiholomorphic differential operator $\xi_{\wt}:=2iy^{\wt}\overline{\frac{\partial}{\partial\overline{\tauz}}}$.
\begin{theorem}\label{thm:negposrel}
Suppose that $k>0$ is an even integer, $D$ is a positive discriminant, and $s\in \C$ satisfies $\re(s)\geq \frac{k}{2}+\frac{1}{4}$.  

\noindent
\begin{enumerate}
\item
For every $\tauz\notin E_D$, we have that
\begin{equation}\label{eqn:xiFs}
\xi_{2-2k}\left(\mathcal{F}_{1-k,s,D}(\tauz)\right)=  2\left(\overline{s}-\frac{3}{4}+\frac{k}{2}\right) f_{k,\overline{s},D}(\tauz).
\end{equation}
\item
For $\tauz\notin E_D$, we have that
\begin{equation}\label{eqn:xi2k}
\xi_{2k}\left(f_{k,s,D}(\tauz)\right)= 2\left(\overline{s}-\frac{k}{2}-\frac{1}{4}\right)\mathcal{F}_{1-k,\overline{s},D}(\tauz).
\end{equation}
\end{enumerate}
\end{theorem}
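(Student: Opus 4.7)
Both identities are established by termwise differentiation of the series, reducing each to a one-variable hypergeometric identity. Since $\tauz\notin E_D$ means $Q_\tauz\ne 0$ for every $Q\in\QD$, the function $\sgn(Q_\tauz)$ is locally constant in a neighborhood of $\tauz$, so one may interchange $\xi_\kappa$ with the sum over $Q$ and work with individual terms.

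The essential simplification is the identity $|Q(\tauz,1)|^2 = y^2(Q_\tauz^2+D)$, which follows from direct expansion. It gives $w := Dy^2/|Q(\tauz,1)|^2 = D/(Q_\tauz^2+D)$, and since $Q(\tauz,1)$ is holomorphic in $\tauz$ while $w$ and $Q_\tauz$ are real, a short calculation yields
$$\frac{\partial w}{\partial\bar\tauz} = \frac{iQ_\tauz w}{\overline{Q(\tauz,1)}},\qquad \overline{(\phineg_s)'(w)}=(\phineg_{\bar s})'(w),\qquad \overline{(\phipos_s)'(w)}=(\phipos_{\bar s})'(w).$$
Plugging these into $\xi_{2-2k}=2iy^{2-2k}\overline{\partial/\partial\bar\tauz}$ applied to $\sgn(Q_\tauz)Q(\tauz,1)^{k-1}\phineg_s(w)$, and using $|Q(\tauz,1)|^2=y^2D/w$ together with $|Q_\tauz|=\sqrt{D(1-w)/w}$, the factors of $y$ and $|Q(\tauz,1)|$ collapse and \eqref{eqn:xiFs} becomes equivalent to the one-variable identity
$$D^{k-\frac{1}{2}}w^{\frac{3}{2}-k}\sqrt{1-w}\,(\phineg_s)'(w) = \left(s+\tfrac{k}{2}-\tfrac{3}{4}\right)\phipos_s(w).$$
An entirely analogous manipulation reduces \eqref{eqn:xi2k} to
$$D^{\frac{1}{2}-k}w^{k+\frac{1}{2}}\sqrt{1-w}\,(\phipos_s)'(w) = \left(s-\tfrac{k}{2}-\tfrac{1}{4}\right)\phineg_s(w).$$

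Both one-variable identities are proved by the same two-step argument. Writing the hypergeometric factor of $\phineg_s$ as $F_- := {_2F_1}(s-\tfrac{k}{2}+\tfrac{1}{4},s+\tfrac{k}{2}-\tfrac{3}{4};2s;w)$, the derivative formula $\frac{d}{dw}[w^b\,{_2F_1}(a,b;c;w)] = bw^{b-1}\,{_2F_1}(a,b+1;c;w)$ combined with the product rule gives $(s+\tfrac{k}{2}-\tfrac{3}{4})F_-+wF_-' = (s+\tfrac{k}{2}-\tfrac{3}{4})H$, where $H := {_2F_1}(s-\tfrac{k}{2}+\tfrac{1}{4},s+\tfrac{k}{2}+\tfrac{1}{4};2s;w)$; Euler's transformation ${_2F_1}(a,b;c;w)=(1-w)^{c-a-b}{_2F_1}(c-a,c-b;c;w)$ applied to $H$ (whose parameters satisfy $c-a-b=-\tfrac{1}{2}$) then identifies $\sqrt{1-w}\,H$ with the hypergeometric factor of $\phipos_s$, and the constant $D^{k-\frac{1}{2}}$ emerges as the ratio of the normalizing factors of $\phipos_s$ and $\phineg_s$. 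The second identity is proved by the symmetric calculation with the roles of $\phipos_s$ and $\phineg_s$ exchanged. The main obstacle is the bookkeeping that reduces the two-variable $\xi_\kappa$-calculation down to the single-variable hypergeometric statement; once $|Q(\tauz,1)|^2 = y^2(Q_\tauz^2+D)$ is exploited, all factors of $y$ and $|Q(\tauz,1)|$ cancel exactly as required.
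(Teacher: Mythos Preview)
Your argument is correct, and it takes a genuinely different route from the paper's own proof.

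The paper proves Theorem \ref{thm:negposrel} entirely through the theta-lift framework: it first identifies $\mathcal{F}_{1-k,s,D}$ and $f_{k,s,D}$ as the images $\Phineg(\PP_{\frac{3}{2}-k,s,D})$ and $\Phipos(\PP_{k+\frac{1}{2},s,D})$ (Theorems \ref{thm:poswt} (2) and \ref{thm:negwt} (2)), then uses the differential identities between the two theta kernels (Lemma \ref{lem:xi1}) together with a Stokes-theorem argument (Lemma \ref{lem:xiTh}) to transport $\xi_{2-2k}$ and $\xi_{2k}$ across the regularized pairing; the constants $\bar s\pm\frac{k}{2}\mp\frac{1}{4}$ then emerge from the known relation $\xi_{\wt}(\PP_{\wt,s,D})=(\bar s-\tfrac{\wt}{2})\PP_{2-\wt,\bar s,D}$.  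In other words, the paper never differentiates the series for $\mathcal{F}_{1-k,s,D}$ or $f_{k,s,D}$ directly.

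Your approach is more elementary and completely self-contained: you differentiate the defining $Q$-sums termwise and reduce each of \eqref{eqn:xiFs} and \eqref{eqn:xi2k} to a single-variable hypergeometric identity, which you then dispatch with the derivative rule $\tfrac{d}{dw}[w^{b}{_2F_1}(a,b;c;w)]=b\,w^{b-1}{_2F_1}(a,b+1;c;w)$ and Euler's transformation.  This avoids the entire regularized-inner-product machinery and the kernel identities of Section \ref{sec:theta}.  What the paper's route buys is the structural statement $\xi_{2-2k}\circ\Phineg=2\,\Phipos\circ\xi_{\frac{3}{2}-k}$ (and its companion), valid for any input and central to the commutative diagram in Section \ref{sec:intro}; your route buys a shorter, purely computational proof of the specific identities \eqref{eqn:xiFs} and \eqref{eqn:xi2k} that does not rely on Theorems \ref{thm:poswt} (2) or \ref{thm:negwt} (2).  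The only point you leave implicit is the justification for termwise differentiation, but for $\tauz\notin E_D$ and $\re(s)\ge\tfrac{k}{2}+\tfrac{1}{4}$ the sums converge locally uniformly together with their derivatives, so this is routine.
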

Theorem \ref{thm:negposrel} states that for $s\geq \frac{k}{2}+\frac{1}{4}$ the following commutative diagram holds:
$$
\xymatrix{\PP_{\frac{3}{2}-k,s,D}\ar@{->}[dd]_{\xi_{\frac{3}{2}-k}}\ar@{->}[rrr]^{\Phineg}& && \mathcal{F}_{1-k,s,D}\ar@{->}[dd]^{\xi_{2-2k}}\\
\\
\left(\overline{s}-\frac{3}{4}+\frac{k}{2}\right) \PP_{k+\frac{1}{2},\overline{s},D}\ar@{->}[rrr]^{2\Phipos}
\ar@{->}[dd]^{\xi_{k+\frac{1}{2}}} &&& 2\left(\overline{s}-\frac{3}{4}+\frac{k}{2}\right) f_{k,\overline{s},D}\ar@{->}[dd]_{\xi_{2k}} \\
\\
-\lambda_s \PP_{\frac{3}{2}-k,s,D}\ar@{->}[rrr]^{4\Phineg} &&& -4\lambda_s \mathcal{F}_{1-k,s,D}
}
$$
Denote the $d$-th Shimura \cite{Shimura} lift by $\mathscr{S}_d$ and $\PP_{\wt,D}:=\PP_{\wt,\frac{k}{2}+\frac{1}{4},D}$.  In the special case that $s=\frac{k}{2}+\frac{1}{4}$ (see Corollary 9 of \cite{KohnenZagier} for the constant multiple of $\mathscr{S}_1$), the diagram becomes the following:
$$
\xymatrix{
\PP_{\frac{3}{2}-k,D}\ar@{->}[dd]_{\xi_{\frac{3}{2}-k}}\ar@{->}[rrr]^{\Phineg} &&&
\mathcal{F}_{1-k,D}\ar@{->}[dd]^{\xi_{2-2k}}\\
\\
\left(k-\frac{1}{2}\right)\PP_{k+\frac{1}{2},D}\ar@{->}[rrr]^{2\Phipos}_{3^{-1}2^{-k}\mathscr{S}_1} &&&
\frac{2^{2k-3}}{3}\left(4\pi D\right)^{\frac{3}{4}-\frac{k}{2}}f_{k,D}
}
$$
\begin{remarks}
\noindent
\begin{enumerate}
\item
The above diagram extends work of Bruinier and Funke \cite{BF} and H\"ovel \cite{Hovel} in the case of $O(2,1)$ to higher weight.
\item
By applying \eqref{eqn:xiFs2} (used to obtain \eqref{eqn:xiFs}) to $s$-derivatives of weak Maass forms, one could also obtain links between modular objects known as sesquiharmonic forms \cite{BDR}.  These functions map to weakly holomorphic modular forms under the hyperbolic Laplacian.
\end{enumerate}
\end{remarks}

The paper is organized as follows.  In Section \ref{sec:harmonic}, we recall the theory of weak Maass forms and give a formal definition of local Maass forms.  Section \ref{sec:regularized} is devoted to the properties of the regularized inner product.  The modularity properties of the theta functions are enunciated in  Section \ref{sec:theta}, where we derive a number of interrelations between the theta functions through differential operators.  The image of $\Phipos$ (Theorem \ref{thm:poswt} (2)) is determined in Section \ref{sec:Phikimage}, while Section \ref{sec:Phi1-kimage} is devoted to the image of $\Phineg$ (Theorem \ref{thm:negwt} (2)) and the injectivity of the lift (Theorem \ref{thm:Heckeneg} (2)).  In Section \ref{sec:negposwt}, Theorem \ref{thm:negposrel} is established and the relationship between $f_{k,s,D}$ and $\mathcal{F}_{1-k,s,D}$ is then used to conclude Theorems \ref{thm:poswt} (1) and \ref{thm:negwt} (1).  Finally, Section \ref{sec:Hecke} concludes the paper with a discussion of the Hecke operators and the injectivity of $\Phipos$ (Theorems \ref{thm:Heckepos} and \ref{thm:Heckeneg} (1)).
\section*{Acknowledgements}
The authors thank Jan Bruinier for suggesting to investigate the connection between $\mathcal{F}_{1-k,D}$ and theta lifts and for fruitful discussion.  The authors also thank Jens Funke for helpful comments which aided the exposition.

\section{Basic facts on weak and local Maass forms}\label{sec:harmonic}

In this section, we recall the basic definitions necessary to describe the modular objects and the theta lifts used in this paper.  We first define the regularized inner product used in the definitions of $\Phiposno$ and $\Phinegno$.  In order to understand the relationship between lifts in different spaces, we then define the Hecke operators, which act formally on any translation invariant function.  We then recall Kohnen's plus space and weak Maass forms, upon which we apply our theta lifts.  The next subsection is devoted to constructing Poincar\'e series which span these spaces of weak Maass forms.  Following this, we give the definition of local Maass forms, which are the focus of this paper. 

Thoughout this section, $\wt\in\frac{1}{2}\Z$ and we set $\Gamma:=\SL_2(\Z)$ whenever $\wt\in \Z$, while $\Gamma:=\Gamma_0(4)$ if $\wt\in \frac{1}{2}\Z\setminus\Z$.  
\subsection{Regularized inner products and Hecke operators}\label{sec:regularized}
For $\bigT>0$, denote the truncated fundamental domain for $\SL_2(\Z)$ by
\begin{equation}\label{eqn:truncdef}
\mathcal{F}_{\bigT} := \left\{ \ztau\in \H: |u|\leq \frac{1}{2}, |\ztau|\geq 1, v\leq \bigT\right\}.
\end{equation}
For a finite index subgroup $\Gamma\subseteq\SL_2(\Z)$ we further define
$$
\mathcal{F}_{\bigT}\left(\Gamma\right):=\bigcup_{\gamma\in \Gamma\backslash \SL_2(\Z)} \gamma \mathcal{F}_{\bigT}.
$$
In particular, we set $\mathcal{F}_{\bigT}(4):=\mathcal{F}_{\bigT}\left(\Gamma_0(4)\right)$.
For two functions $G$ and $H$ satisfying weight $\wt$ modularity for the group $\Gamma$, we define, whenever the limit exists, the regularized inner product 
$$
\left<G,H\right>^{\text{reg}}:=\frac{1}{\left[\SL_2(\Z):\Gamma\right]}\lim_{\bigT\to\infty}\int_{\mathcal{F}_{\bigT}(\Gamma)} G(\ztau)\overline{H(\ztau)} v^{\wt} \frac{dudv}{v^2}.
$$
We use the following lemma, which follows by standard arguments using Stokes's Theorem.
\begin{lemma}\label{lem:Stokes}
Suppose that $F$, $G:\mathbb{H}\to\mathbb{C}$ are real analytic functions that satisfy $F|_{2-\wt} \gamma=F$ and $G|_{\wt} \gamma=G$ for all $\gamma\in\Gamma$.  Then
\begin{equation}\label{eqn:xicommute prepare}
\int_{\mathcal{F}_{\bigT}\left(\Gamma\right)} \xi_{2-\wt}\left(F(\ztau)\right)\,\overline{G(\ztau)}\,v^{\wt-2}\,du\,dv+\int_{\mathcal{F}_{\bigT}\left(\Gamma\right)} \xi_{\wt}\left(G(\ztau)\right)\,\overline{F(\ztau)}\,v^{-\wt}\,du\,dv=-\int_{\partial\mathcal{F}_{\bigT}\left(\Gamma\right)} \overline{F(\ztau)G(\ztau)}\,d\overline{\ztau}.
\end{equation}
\end{lemma}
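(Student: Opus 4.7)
The plan is to read the identity as a direct consequence of Stokes's theorem applied to the $1$-form $\omega := \overline{F(\ztau)G(\ztau)}\,d\overline{\ztau}$ on the piecewise-smooth region $\mathcal{F}_{\bigT}(\Gamma)$.

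First I would verify that $\omega$ is $\Gamma$-invariant. For $\gamma = \left(\begin{smallmatrix} a & b \\ c & d \end{smallmatrix}\right) \in \Gamma$, the two weight assumptions combine to give $F(\gamma\ztau)G(\gamma\ztau) = (c\ztau+d)^{2-\wt}(c\ztau+d)^{\wt}F(\ztau)G(\ztau) = (c\ztau+d)^2 F(\ztau)G(\ztau)$, and since $d\overline{\gamma\ztau} = (c\overline{\ztau}+d)^{-2}\,d\overline{\ztau}$, the factor $(c\overline{\ztau}+d)^2$ produced by complex conjugation cancels the pullback of $d\overline{\ztau}$, so $\gamma^*\omega = \omega$. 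This invariance guarantees that when $\mathcal{F}_{\bigT}(\Gamma)$ is written as the union of translates $\gamma\mathcal{F}_{\bigT}$, the contributions to $\int_{\partial\mathcal{F}_{\bigT}(\Gamma)}\omega$ from internal edges shared by adjacent translates cancel in pairs with opposite orientation, so only the outer boundary contributes and the classical Stokes's theorem applies cleanly.

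Next I would compute the exterior derivative of $\omega$. Using $d\ztau \wedge d\overline{\ztau} = -2i\,du\wedge dv$ together with $\partial_{\ztau}(\overline{FG}) = \overline{\partial_{\overline{\ztau}}(FG)}$, the $d\tau$-component is the only one that survives the wedge, yielding
\[
d\omega = \overline{\partial_{\overline{\ztau}}(FG)}\,d\ztau\wedge d\overline{\ztau} = -2i\bigl(\overline{F_{\overline{\ztau}}}\,\overline{G} + \overline{F}\,\overline{G_{\overline{\ztau}}}\bigr)\,du\wedge dv.
\]
On the other hand, unpacking the definition $\xi_{\wt} = 2iv^{\wt}\overline{\partial_{\overline{\ztau}}}$ gives $\xi_{2-\wt}(F)\,\overline{G}\,v^{\wt-2} = 2i\,\overline{F_{\overline{\ztau}}}\,\overline{G}$ and $\xi_{\wt}(G)\,\overline{F}\,v^{-\wt} = 2i\,\overline{G_{\overline{\ztau}}}\,\overline{F}$, so that the sum of the two integrands on the left-hand side of the claimed identity is exactly $-d\omega$, after identifying the oriented form $du\wedge dv$ with the positive Lebesgue measure $du\,dv$.

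Combining these two computations, Stokes's theorem $\int_{\mathcal{F}_{\bigT}(\Gamma)} d\omega = \int_{\partial\mathcal{F}_{\bigT}(\Gamma)} \omega$ yields precisely \eqref{eqn:xicommute prepare}. I do not expect any substantive obstacle: the only points requiring care are the orientation convention on $\partial\mathcal{F}_{\bigT}(\Gamma)$ and the observation that this region is a Lipschitz domain (being a finite union of translates of the standard truncated fundamental domain with piecewise-real-analytic boundary), which is enough to apply the classical Stokes's theorem without further regularity considerations. All real content is the $\Gamma$-invariance of $\omega$ and the short form-level computation identifying $-d\omega$ with the sum of the two $\xi$-integrands.
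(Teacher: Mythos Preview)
Your proof is correct and is exactly the standard Stokes argument the paper alludes to without supplying details. One small caveat: when $\wt\in\frac{1}{2}\Z\setminus\Z$ the individual automorphy factors involve theta-multipliers rather than bare powers $(c\ztau+d)^{\wt}$, but their product is still $(c\ztau+d)^{2}$ (the extra Kronecker-symbol and $\epsilon_d$ factors cancel), so your invariance claim for $\omega$ stands.
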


A number of important operators are Hermitian with respect to the regularized inner product.  One such class of operators is the \begin{it}Hecke operators.\end{it}  Suppose that $F$ is a function satisfying weight $\wt$ modularity and write its Fourier expansion as 
$$
F(\ztau)=\sum_{n\in\Z} a_{v}(n)e^{2\pi i nu}.
$$
If $\wt\in\Z$ (resp. $\wt\in \frac{1}{2}\Z\setminus\Z$), then for a prime $p$, the Hecke operator $T_p$ (resp. $T_{p^2}$) is defined by
\begin{align}\label{eqn:Tpdef}
F\Big|_{\wt}T_p(\ztau)&:=\sum_{n\in \Z} \left(a_{v}(pn)+p^{\wt-1}a_{v}\left(\frac{n}{p}\right) \right)e^{2\pi i nu},\\
\label{eqn:Tp2def}
F\Big|_{\wt}T_{p^2}(\ztau)&:=\sum_{n\in \Z} \left( a_{v}\left(p^2n\right)+ p^{\wt-\frac{3}{2}}\left(\frac{(-1)^{\wt-\frac{1}{2}}n}{p}\right)a_v(n)+ p^{2\wt-2} a_{v}\left(\frac{n}{p^2}\right)\right)e^{2\pi i nu}.
\end{align}

We apply the regularized inner product to (half-integral weight) weak Maass forms, which we define in the following subsection.
\subsection{Weak Maass forms}
When $\wt\in \frac{1}{2}\Z\setminus\Z$, we are interested in weight $\wt$ real analytic modular forms on $\Gamma$ in Kohnen's plus space.  This means that the Fourier expansions are supported on the coefficients $n$ satisfying $(-1)^{\wt-\frac{1}{2}}n\equiv 0,1\pmod{4}$.  We use $\pr$ to denote the projection operator (see Section 2.3 of \cite{KohnenMathAnn}) into Kohnen's plus space.  It is useful to recall that if $F$ is modular in Kohnen's plus space for $\Gamma$, then its Fourier expansions at the cusps $0$ and $\frac{1}{2}$ are determined by the expansion at $i\infty$ (see \cite{KohnenMathAnn} for a proof in the holomorphic case).  Like the Hecke operators, the projection operator $\pr$ is Hermitian with respect to the regularized inner product, i.e.,
\begin{equation}\label{eqn:prherm}
\left<G\Big|\pr, H\right>^{\text{reg}} = \left<G,H\Big|\pr\right>^{\text{reg}}.
\end{equation}

The real analytic modular forms of particular interest for this paper are weak Maass forms.  A good background reference for weak Maass forms is \cite{BF}.  Recall that we write $\ztau=u+iv$ throughout.  For $\wt\in \frac{1}{2}\Z$, the weight $\wt$ \begin{it}hyperbolic Laplacian\end{it} is defined by
$$
\Delta_{\wt}:=\Delta_{\wt,\ztau} := -v^2\left( \frac{\partial^2}{\partial u^2}+\frac{\partial^2}{\partial v^2}\right) + i\wt v\left(\frac{\partial}{\partial u}+i \frac{\partial}{\partial v}\right).
$$
It is related to the operator $\xi_{\wt}=\xi_{\wt,\ztau}:=2iv^{\wt} \overline{\frac{\partial}{\partial\overline{\ztau}}}$ through
$$
\Delta_{\wt} = -\xi_{2-\wt}\circ\xi_{\wt}.
$$
In order to define weak Maass forms, we require
\begin{equation}\label{eqn:Mdef}
\mathcal{M}_{\wt,s}\left(t\right):=\left|t\right|^{-\frac{\wt}{2}}M_{\frac{\wt}{2}\sgn(t),\, s-\frac{1}{2}}\left(|t|\right),
\end{equation}
where $M_{\mu,s-\frac{1}{2}}$ is the usual $M$-Whittaker function.   For $\re\left(s\pm \mu \right)>0$ and $v>0$, we have the integral representation
\begin{equation}\label{eqn:Mintrep}
M_{\mu,s-\frac{1}{2}}(v)=v^{s}e^{\frac{v}{2}}\frac{\Gamma(2s)}{\Gamma\left(s+\mu\right)\Gamma\left(s-\mu\right)}\int_0^1 t^{s+\mu-1}(1-t)^{s-\mu-1}e^{-vt}dt.
\end{equation}
In the special case that $\mu=s$, we have
\begin{equation}\label{eqn:M1F1}
M_{\mu,s-\frac{1}{2}}(v)=e^{-\frac{v}{2}} v^s.
\end{equation}
Furthermore, as $v\to\infty$, the Whittaker function satisfies the following asymptotic behavior for $\mu\neq s$:
\begin{equation}\label{eqn:Whittasympt}
M_{\mu,s-\frac{1}{2}}(v)\sim \frac{\Gamma(2s)}{\Gamma(s-\mu)} e^{\frac{v}{2}} v^{-\mu}.
\end{equation}

We move on to the definition of weak Maass forms.  
For $s\in \C$ a \begin{it}weak Maass form\end{it} $F:\H\to\C$ of weight $\wt$ for $\Gamma$ with eigenvalue $\lambda=\left(s-\frac{\wt}{2}\right)\left(1-s-\frac{\wt}{2}\right)$ is a real analytic function satisfying:
\noindent

\noindent
\begin{enumerate}
\item 
For every $\gamma\in \Gamma$, one has $F|_{\wt} \gamma = F$, where $|_{\wt}$ denotes the usual weight $\wt$ slash-operator.
\item 
One has 
$\Delta_{\wt}\left(F\right)=\lambda F.$
\item
 There exist $a_1,\dots,a_N\in \C$ for which
$$
F(\ztau)-\sum_{m=1}^N a_m\mathcal{M}_{\wt,s}\left(4\pi \sgn(\wt)m v\right)e^{2\pi i m\sgn(\wt)u}=O\left(v^{1-\re(s)-\frac{\wt}{2}}\right).
$$
There are analogous conditions at the other cusps of $\Gamma$.
\end{enumerate}
\subsection{Poincar\'e series}
One builds explicit examples of weak Maass forms by constructing Poincar\'e series \cite{Fay}.  For $m\in\Z\setminus\{0\}$, the function
$$
\psi_{m,\wt}\left(s;\ztau\right):=\left(4\pi |m|\right)^{\frac{\wt}{2}}\Gamma(2s)^{-1}\mathcal{M}_{\wt,s}\left(4\pi m v\right)e^{2\pi i mu}
$$
is an eigenfunction for $\Delta_{\wt}$ with eigenvalue $\left(s-\frac{\wt}{2}\right)\left(1-s-\frac{\wt}{2}\right)$.  Thus, one concludes that for $\re(s)>1$ the Poincar\'e series
\begin{equation}\label{eqn:Poincdef}
\PP_{\wt,s,\Gamma,m}(\ztau):=\sum_{\gamma\in \Gamma_{\infty}\backslash \Gamma} \psi_{\sgn(\wt)m,\wt}\left(s;\ztau\right)\Big|_{\wt}\gamma,
\end{equation}
where $\Gamma_\infty:=\left\{\pm\left(\begin{smallmatrix} 1&n\\0&1\end{smallmatrix}\right):n\in\Z\right\}$, 
is also an eigenfunction under $\Delta_{\wt}$ with the same eigenvalue.  Moreover, the space of weight $\wt$ weak Maass forms with this eigenvalue is spanned by such Poincar\'e series. The Poincar\'e series satisfies the growth condition
\begin{equation}\label{eqn:Poincgrowth}
\PP_{\wt,s,\Gamma,m}(\ztau)-\psi_{\sgn(\wt)m,\wt}\left(s;\ztau\right) = O\left(v^{1-\re(s)-\frac{\wt}{2}}\right).
\end{equation}
In the case that $\wt\in \frac{1}{2}\Z\setminus\Z$, we then project the Poincar\'e series into Kohnen's plus space, defining
\begin{equation}\label{eqn:Psdef}
\PP_{\wt,s,m}:=\PP_{\wt,s,\Gamma_0\left(4\right),m}\Big|\pr.
\end{equation}
In the special cases that $s=1-\frac{\wt}{2}$ or $s=\frac{\wt}{2}$, the resulting Poincar\'e series is harmonic.  For $D\neq 0$, the positive and negative weight Poincar\'e series are related to each other via
\begin{equation}\label{eqn:xiPs}
\xi_{\wt}\left(\PP_{\wt,s,D}\right)=\left(\overline{s}-\frac{\wt}{2}\right) \PP_{2-\wt,\overline{s},D}.
\end{equation}

\subsection{Local Maass forms}
Mirroring the definition of weak Maass forms, for $\wt\in 2\Z$, $\lambda\in \C$, and a measure zero set $E$, we call a function $\mathcal{F}$ a weight $\wt$ \begin{it}local Maass form\end{it} with eigenvalue $\lambda$ and exceptional set $E$ if $\mathcal{F}$ satisfies the following:
\noindent
\begin{enumerate}
\item
For every $\gamma\in \SL_2(\Z)$, one has $\mathcal{F}|_{\wt}\gamma = \mathcal{F}$ 
\item
For every $\ztau\notin E$ there exists a neighborhood around $\ztau$ for which $\mathcal{F}$ is real analytic and
$$
\Delta_{\wt}(\mathcal{F})(\ztau) = \lambda \mathcal{F}(\ztau).
$$
\item
For $\ztau\in E$ one has
$$
\mathcal{F}(\ztau) = \frac{1}{2}\lim_{r\to 0^+} \left(\mathcal{F}\left(\ztau+ir\right)+\mathcal{F}\left(\ztau-ir\right)\right).
$$
\item
The function $\mathcal{F}$ exhibits at most polynomial growth as $v\to \infty$.
\end{enumerate}
Examples of locally harmonic Maass forms (those with eigenvalue $0$) are given in \cite{BKW} as ``quadratic form Poincar\'e series.''  In this paper, we give further examples of local Maass forms via theta lifts.

\section{Indefinite theta functions}\label{sec:theta}

In this section we collect several important properties of the theta functions \eqref{eqn:thetadef1} and \eqref{eqn:thetadef2}.  The modularity properties of these indefinite theta functions follow by a result of Vign\'eras \cite{Vigneras}.  To state these, we define the Euler operator $E:=\sum_{i=1}^{n} w_i\frac{\partial}{\partial w_i}$.  As usual, we denote the Gram matrix associated to a nondegenerate quadratic form $q$ on $\R^n$ by $A$.  The \begin{it}Laplacian\end{it} associated to $q$ is then defined by $\Delta:=\left\langle \frac{\partial}{\partial w}, A^{-1}\frac{\partial}{\partial w}\right\rangle$.  Here $\left<\cdot,\cdot\right>$ denotes the usual inner product on $\R^n$.

\begin{theorem}[Vign\'eras]\label{thm:Vigneras}
Suppose that $n\in \N$, $q$ is a nondegenerate quadratic form on $\R^n$, $L\subset \R^n$ is a lattice on which $q$ takes integer values, and $p:\R^n\to\C$ is a function satisfying the following conditions:
\begin{itemize}
\item[(i)]
The function $f(w):=p(w)e^{-2\pi q(w)}$ times any polynomial of degree at most 2 and all partial derivatives of $f$ of order at most $2$ are elements of $L^2\left(\R^n\right)\cap L^1\left(\R^n\right)$.
\item[(ii)]
For some $\lambda\in\Z$, the function $p$ satisfies
\[
\left(E-\frac{\Delta }{4\pi}\right)p=\lambda p.
\]
\end{itemize}
Then the indefinite theta function
\[
v^{-\frac{\lambda}{2}}\sum_{w\in L} p\left(w\sqrt{v}\right) e^{2\pi i q(w)\ztau}
\]
is modular of  weight $\lambda+\frac{n}{2}$ for $\Gamma_0(N)$ and character $\chi\cdot\chi_{-4}^{\lambda}$, where $N$ and $\chi$ are the level and character of $q$ and $\chi_{-4}$ is the unique primitive Dirichlet character of conductor $4$.
\end{theorem}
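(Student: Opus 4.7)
The plan is to establish modularity by verifying invariance under generators of $\Gamma_0(N)$, which reduces to checking the transformations $T\colon\tau\mapsto\tau+1$ and an inversion $S\colon\tau\mapsto-1/\tau$. The central tool is Poisson summation on the lattice $L$, and condition (ii) will play the decisive role: it is precisely the intertwining relation needed for the Fourier transform of the summand to produce the expected automorphy factor of weight $\lambda+\frac{n}{2}$.

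Writing $\tau=u+iv$ and using homogeneity $q(w\sqrt{v})=v\,q(w)$, the theta series rewrites as
\[
v^{-\lambda/2}\sum_{w\in L}f(w\sqrt{v})\,e^{2\pi i q(w)u},\qquad f(w):=p(w)\,e^{-2\pi q(w)}.
\]
Hypothesis (i) ensures $f\in L^1\cap L^2$ with comparable bounds on low-order derivatives, justifying absolute convergence, smoothness in $\tau$, and termwise manipulation. Invariance under $T$ is immediate, because $q(w)\in\Z$ on $L$ gives $e^{2\pi i q(w)(u+1)}=e^{2\pi i q(w)u}$.

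The heart of the argument is Poisson summation applied at fixed $v$, which converts $\sum_{w\in L}F_\tau(w)$ into a sum over the dual lattice $\sum_{\xi\in L^*}\widehat F_\tau(\xi)$ (divided by the covolume), where $F_\tau(w):=v^{-\lambda/2}p(w\sqrt{v})e^{2\pi i q(w)\tau}$. Condition (ii), in the form $(E-\Delta/(4\pi))p=\lambda p$, is exactly the differential equation ensuring that the Fourier transform of $p(w)e^{\pi i q(w)\tau}$ is proportional to $p(\xi/\sqrt{v})e^{-\pi i q^{*}(\xi)/\tau}$, with $q^{*}$ the inverse form and with the proportionality constant furnishing exactly the weight $\lambda+\frac{n}{2}$ factor $(-i\tau)^{\lambda+n/2}$. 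The level $N$ and character $\chi$ emerge from the Gauss sum arising in the reduction $L^{*}/L$ (present because $L$ is only integer-valued, not necessarily even), while the additional factor $\chi_{-4}^{\lambda}$ tracks the powers of $i$ introduced by the weight shift in the Fourier transform formula.

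The principal obstacle will be the explicit Fourier transform computation in the $S$-step: verifying that condition (ii) produces exactly the automorphy factor $(c\tau+d)^{\lambda+n/2}$ together with the claimed multiplier system. An elementary route is to differentiate $\Theta$ under the sum with respect to $\tau$, use (ii) to rewrite $\sum_{w\in L}q(w)p(w\sqrt{v})(\cdots)$ via derivatives of $p$, and thereby derive a first-order differential equation in $\tau$ satisfied by $\Theta$; combining this equation with Poisson summation and with the cusp asymptotics at $\tau\to i\infty$ supplied by hypothesis (i) then pins down the constants in the resulting automorphy relation. Alternatively, one may recognize the summand as a vector in a Weil representation on $\R^{n}$, with $E$ and $\Delta/(4\pi)$ corresponding to infinitesimal operators of $\mathrm{SL}_{2}(\R)$; condition (ii) then identifies $p$ as a weight $\lambda+\frac{n}{2}$ eigenvector, and modularity follows from the standard compatibility between the theta intertwiner and the metaplectic action.
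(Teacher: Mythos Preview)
The paper does not prove this theorem: it is quoted as a result of Vign\'eras \cite{Vigneras} and used as a black box. Immediately after the statement the authors add only a remark about a notational discrepancy between Vign\'eras and Shimura, and then apply the theorem to deduce Proposition~\ref{prop:Thetamodular}. So there is no ``paper's own proof'' to compare against.

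Your sketch is broadly in the spirit of Vign\'eras's original argument (Poisson summation together with the observation that condition (ii) makes $p$ an eigenfunction of the operator whose exponentiation governs the Fourier transform of $p(w)e^{-2\pi q(w)}$), but a few points deserve care. First, reducing to $T$ and $S\colon\tau\mapsto-1/\tau$ is not literally correct for $\Gamma_0(N)$ when $N>1$, since $S\notin\Gamma_0(N)$; one instead works with the full lattice theta series transforming under a Weil representation of the metaplectic cover of $\SL_2(\Z)$ and then restricts, or else checks a generating set of $\Gamma_0(N)$ directly. Second, the precise mechanism by which (ii) yields the automorphy factor is that the Fourier transform intertwines the operators $E$ and $\Delta/(4\pi)$ (up to a sign and a shift by $n/2$), so an eigenfunction of $E-\Delta/(4\pi)$ with eigenvalue $\lambda$ is sent to an eigenfunction with the appropriate shifted eigenvalue; making this quantitative is what produces the weight $\lambda+n/2$. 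Your two proposed routes (differential equation in $\tau$, or Weil representation) both work, but neither is fleshed out here beyond a heuristic level.
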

\begin{remark}
Note that the definition of the character given in Vign\'eras \cite{Vigneras} differs to that given by Shimura \cite{Shimura} by a factor of $\chi_{-4}^{\lambda}$.  We adopt Shimura's notation here.
\end{remark}
Applying Theorem \ref{thm:Vigneras} to $\Thpos$ and $\Thneg$  yields their modularity properties (see \cite{BKZ} for details). 
\begin{proposition}\label{prop:Thetamodular}
\noindent

\noindent
\begin{enumerate}
\item
The function $\Thpos\left(-\overline{\tauz},\ztau\right)$ transforms like a modular form of weight $k+\frac{1}{2}$ in Kohnen's plus space on $\Gamma_0(4)$ in $\ztau$ and weight $2k$ on $\SL_2(\Z)$ in $\tauz$.
\item
The function $\Thneg$ transforms like a modular form of weight $\frac32-k$ in Kohnen's plus space on $\Gamma_0(4)$ in $\ztau$ and weight $2-2k$ on $\SL_2(\Z)$ in $\tauz$.
\end{enumerate}
\end{proposition}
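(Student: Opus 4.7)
The plan is to establish both parts by combining Vign\'eras's Theorem \ref{thm:Vigneras}, applied in the $\ztau$ variable, with the $\SL_2(\Z)$-invariance of the sum over binary quadratic forms, which handles the $\tauz$ variable.

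For part (1), view $\Thpos$ as an indefinite theta series on the lattice $L = \Z^3$ of triples $(a,b,c)$ parametrizing binary quadratic forms $Q = [a,b,c]$, equipped with the indefinite rational quadratic form $q(Q) := -D(Q)/4 = ac - b^2/4$, which has signature $(1,2)$ and level $4$. The key identity
\[
|Q(\tauz,1)|^2 = y^2\bigl(Q_\tauz^2 - D\bigr),
\]
immediate from the definition of $Q_\tauz$, shows that $R_\tauz(Q) := 2Q_\tauz^2 - D$ is positive definite in $Q$ for each fixed $\tauz\in\H$ and is the majorant of $-q$ attached to $\tauz$. Combining the Gaussian $e^{-4\pi Q_\tauz^2 v}$ with the factor $e^{-2\pi D v}$ from $e^{2\pi i D\ztau}$ produces $e^{-2\pi v R_\tauz(Q)}e^{2\pi i q(Q)\ztau}$ up to normalization. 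After rescaling $Q\mapsto Q\sqrt{v}$, this places $\Thpos$ into the Vign\'eras form with polynomial factor $p_\tauz(Q)$ essentially given by $Q(\tauz,1)^k$ times the ratio of the majorant to the indefinite Gaussian.

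The two Vign\'eras hypotheses then need to be checked. Condition (i) is automatic, because the positive-definite majorant dominates any polynomial growth together with the indefinite Gaussian, so $p_\tauz(w)e^{-2\pi q(w)}$ and all relevant derivatives decay rapidly. For condition (ii), the polynomial $Q(\tauz,1)^k = (a\tauz^2 + b\tauz + c)^k$ is homogeneous of degree $k$ in $(a,b,c)$ and the linear form $a\tauz^2 + b\tauz + c$ is isotropic for $q$ (a direct computation with the Gram matrix of $q$), so $Q(\tauz,1)^k$ is $q$-harmonic of degree $k$. Bookkeeping the contribution of the absorbed Gaussian factor yields $(E - \Delta/(4\pi))p_\tauz = (k-1)p_\tauz$, and Vign\'eras's theorem then gives modularity of weight $(k-1) + \tfrac{3}{2} = k + \tfrac{1}{2}$ in $\ztau$ on $\Gamma_0(4)$ with the expected character.

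Modularity in $\tauz$ follows from the fact that $\SL_2(\Z)$ permutes $\QD$ via $Q\mapsto Q\circ M$, preserving the discriminant; combined with the transformation rules of $Q(\tauz,1)^k$ and $Q_\tauz$ under $\tauz\mapsto M\tauz$, reindexing the sum yields the weight $2k$ automorphy factor. Since $D = b^2 - 4ac$ forces the Fourier expansion to be supported on $D\equiv 0,1\pmod 4$, the sum lies directly in Kohnen's plus space. The proof of (2) is parallel: one checks that $Q_\tauz Q(\tauz,1)^{k-1}$ is homogeneous of degree $k$ in $Q$ and $q$-harmonic, producing Vign\'eras eigenvalue $\lambda = -k$, hence weight $\tfrac{3}{2} - k$ in $\ztau$ and weight $2 - 2k$ in $\tauz$. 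The main technical obstacle is precisely the $q$-harmonicity of $Q_\tauz Q(\tauz,1)^{k-1}$ in (2): because $Q_\tauz$ is not $\tauz$-holomorphic, this cannot be argued via the isotropy of a single linear form as in (1), but requires computing the action of $\Delta$ on the mixed product directly, together with careful bookkeeping of the powers of $v$ and $y$ absorbed into the Vign\'eras polynomial — exactly the computation deferred to \cite{BKZ}.
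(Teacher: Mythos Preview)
Your proposal is correct and follows exactly the approach the paper indicates: apply Vign\'eras's Theorem \ref{thm:Vigneras} for the $\ztau$-modularity and use the $\SL_2(\Z)$-equivariance of the set of binary quadratic forms for the $\tauz$-modularity, with the detailed verification of the Vign\'eras hypotheses deferred to \cite{BKZ}. One small imprecision worth flagging: in part (2) the eigenvalue $\lambda=-k$ does not come from the harmonicity and degree of $Q_\tauz Q(\tauz,1)^{k-1}$ alone (that would yield $+k$), but only after the Gaussian $e^{-4\pi|Q(\tauz,1)|^2/y^2}$ is absorbed into the Vign\'eras function $p$; you do acknowledge this bookkeeping at the end, so the sketch is sound.
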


The following lemma is the key relation needed to establish a link between the functions $f_{k,s,D}$ and $\mathcal{F}_{1-k,s,D}$.  The correspondence is formed through a relation betwen the respective differential operators in $\ztau$ and $\tauz$ on $\Thpos$ and $\Thneg$, mirroring an important connection formed in \cite{BF}.
\begin{lemma}\label{lem:xi1}
For every integer $k\geq 1$, one has
\begin{align}\label{eqn:thetaid1}
\xi_{k+\frac{1}{2},\ztau}\left(\Thpos\left(\tauz,\ztau\right)\right)&=-iy^{2-2k}\frac{\partial}{\partial\tauz}\Thneg(-\overline{\tauz},\ztau),\\
\label{eqn:xiThtauTh*z}
\xi_{\frac{3}{2}-k,\ztau}\left(\Thneg\left(-\overline{\tauz},\ztau\right)\right)&=-i y^{2k}\frac{\partial}{\partial \tauz}\Thpos\left(\tauz,\ztau\right).
\end{align}
\end{lemma}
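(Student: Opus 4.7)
Both identities will be established by term-by-term comparison of the sums over $Q\in\QD$, after first rewriting $\Thneg(-\overline{\tauz},\ztau)$ in a form adapted to the right-hand sides. Direct expansion of $Q(\tauz,1)=(ax^2-ay^2+bx+c)+iy(2ax+b)$ together with $yQ_{\tauz}=a(x^2+y^2)+bx+c$ yields the elementary identity
\[
\frac{|Q(\tauz,1)|^2}{y^2}=Q_{\tauz}^2+D.
\]
Combining this with the involution $[a,b,c]\mapsto[a,-b,c]$ of $\QD$, under which $Q(-\overline{\tauz},1)\mapsto\overline{Q(\tauz,1)}$ and $Q_{-\overline{\tauz}}\mapsto Q_{\tauz}$, and absorbing the factor $e^{-4\pi Dv}$ so that $e^{-2\pi iD\ztau}$ is replaced by $e^{-2\pi iD\overline{\ztau}}$, gives the symmetric form
\[
\Thneg(-\overline{\tauz},\ztau)=v^k\sum_{\substack{D\in\Z\\ Q\in\QD}}Q_{\tauz}\,\overline{Q(\tauz,1)}^{k-1}\,e^{-4\pi Q_{\tauz}^2 v}\,e^{-2\pi iD\overline{\ztau}}.
\]

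The computational heart of both identities is the Wirtinger formula
\[
\frac{\partial}{\partial\tauz}Q_{\tauz}=\frac{i\,\overline{Q(\tauz,1)}}{2y^2},
\]
which I would verify by computing $\partial_{\tauz}(yQ_{\tauz})=a\overline{\tauz}+b/2$ from $yQ_{\tauz}=a\tauz\overline{\tauz}+\tfrac{b}{2}(\tauz+\overline{\tauz})+c$, applying $\partial_{\tauz}y=-i/2$ to obtain $\partial_{\tauz}Q_{\tauz}=(2a\overline{\tauz}+b+iQ_{\tauz})/(2y)$, and recognizing that $y(2a\overline{\tauz}+b+iQ_{\tauz})=i\overline{Q(\tauz,1)}$ by comparing real and imaginary parts.

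For \eqref{eqn:thetaid1}, on a single summand of $\Thpos$ the operator $\partial/\partial\overline{\ztau}$ acts only on $v^{1/2}e^{-4\pi Q_{\tauz}^2 v}$ (using $\partial v/\partial\overline{\ztau}=i/2$). After conjugation and multiplication by $2iv^{k+1/2}$ this yields
\[
\tfrac12\,y^{-2k}\,\overline{Q(\tauz,1)}^k\,v^k\,(1-8\pi Q_{\tauz}^2 v)\,e^{-4\pi Q_{\tauz}^2 v}\,e^{-2\pi iD\overline{\ztau}}.
\]
On the right, the product rule gives $\partial_{\tauz}(Q_{\tauz}e^{-4\pi Q_{\tauz}^2 v})=(1-8\pi Q_{\tauz}^2 v)e^{-4\pi Q_{\tauz}^2 v}\partial_{\tauz}Q_{\tauz}$, and substituting the Wirtinger identity then multiplying by $-iy^{2-2k}$ produces the same expression per summand.

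For \eqref{eqn:xiThtauTh*z}, on the left $\partial/\partial\overline{\ztau}$ acts on $v^k$, $e^{-4\pi Q_{\tauz}^2 v}$ and $e^{-2\pi iD\overline{\ztau}}$; invoking $Q_{\tauz}^2+D=|Q(\tauz,1)|^2/y^2$ and multiplying by $2iv^{3/2-k}$ yields the per-summand contribution
\[
v^{1/2}Q_{\tauz}Q(\tauz,1)^{k-1}\left(k-\frac{4\pi v|Q(\tauz,1)|^2}{y^2}\right)e^{-4\pi Q_{\tauz}^2 v}\,e^{2\pi iD\ztau}.
\]
On the right, $\partial_{\tauz}$ applied to a summand of $\Thpos$ produces three terms (from $y^{-2k}$, $Q(\tauz,1)^k$ and the exponential); the Wirtinger identity turns the exponential contribution into one containing $|Q(\tauz,1)|^2 Q(\tauz,1)^{k-1}$, while the first two combine via the elementary identity
\[
Q(\tauz,1)-iy(2a\tauz+b)=yQ_{\tauz}
\]
(verified by expanding $\tauz=x+iy$) into a contribution of $kQ_{\tauz}Q(\tauz,1)^{k-1}$. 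Multiplying by $-iy^{2k}$ then reproduces the left-hand side term-by-term. The main obstacle is bookkeeping---tracking Wirtinger signs alongside the involution $Q\mapsto[a,-b,c]$ and spotting the algebraic collapse $Q(\tauz,1)-iy(2a\tauz+b)=yQ_{\tauz}$ that reduces the three product-rule terms to two; conceptually, the whole argument is driven by the Wirtinger identity for $\partial_{\tauz}Q_{\tauz}$.
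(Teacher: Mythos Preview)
Your proposal is correct and follows essentially the same approach as the paper's proof: both arguments reduce to term-by-term comparison driven by the identity $\partial_{\tauz}Q_{\tauz}=\tfrac{i}{2y^2}\overline{Q(\tauz,1)}$ together with $|Q(\tauz,1)|^2/y^2=Q_{\tauz}^2+D$ and the involution $[a,b,c]\mapsto[a,-b,c]$ on $\QD$. The only cosmetic differences are that the paper applies the involution at the end rather than rewriting $\Thneg(-\overline{\tauz},\ztau)$ upfront, and packages your algebraic collapse $Q(\tauz,1)-iy(2a\tauz+b)=yQ_{\tauz}$ as the derivative identity $y^2\partial_{\tauz}\bigl(y^{-2}Q(\tauz,1)\bigr)=iQ_{\tauz}$.
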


\begin{proof}
We first prove \eqref{eqn:thetaid1}.  We compute that $\frac{\partial}{\partial \tauz}\Thneg\left(-\overline{\tauz},\ztau\right)$ equals
$$
\sum_{\substack{D\in \Z \\ Q\in \QD}} Q(-\overline{\tauz},1)^{k-1}e^{-\frac{4\pi v}{y^2} \left|Q(-\overline{\tauz},1)\right|^2} e^{-2\pi i D\tau}\left(\frac{\partial}{\partial \tauz}Q_{-\overline{\tauz}} - 4 \pi Q_{-\overline{\tauz}} v\frac{\partial}{\partial \tauz}\left(\frac{|Q(-\overline{\tauz},1)|^2}{y^2}\right)\right).
$$
We then use
\begin{equation}\label{eqn:Qrewrite}
\left|Q(\tauz,1)\right|^2 = Q_{\tauz}^2y^2 + Dy^2
\end{equation}
and 
$$
y^2\frac{\partial}{\partial \tauz}Q_{-\overline{\tauz}} = \frac{i}{2} Q(-\overline{\tauz},1)
$$
to obtain
$$
-iy^{2-2k}\frac{\partial}{\partial \tauz}\Thneg\left(-\overline{\tauz},\ztau\right)= \frac{1}{2}y^{-2k}v^k \sum_{\substack{D\in \Z \\ Q\in \QD}} Q(-\overline{\tauz},1)^{k}e^{-4\pi Q_{-\overline{\tauz}}^2 v}e^{-2\pi i D\overline{\ztau}}\left(1- 8 \pi Q_{-\overline{\tauz}}^2 v\right).
$$

We similarly compute the action of $\xi_{k+\frac{1}{2},\ztau}$ on $\Thpos$.  A straightforward calculation yields 
$$
\xi_{k+\frac{1}{2},\ztau}\left(\Thpos\left(\tauz,\ztau\right)\right)= \frac{1}{2}y^{-2k}v^k \sum_{\substack{D\in \Z\\ Q\in \QD}} Q(\overline{\tauz},1)^{k} e^{-4\pi Q_{\tauz}^2v}e^{-2\pi i D\overline{\ztau}}\left(1 - 8\pi Q_{\tauz}^2 v\right).
$$
Equation \eqref{eqn:thetaid1} now follows immediately by the change of variables $Q=[a,b,c]\to [a,-b,c]=:\widetilde{Q}\in \QD$, noting that
\begin{equation}\label{eqn:Qtilderef}
\widetilde{Q}\left(\overline{z},1\right)=Q\left(-\overline{z},1\right)\qquad \text{and}\qquad \widetilde{Q}_{\tauz}=Q_{-\overline{\tauz}}.
\end{equation}

We move on to proving \eqref{eqn:xiThtauTh*z}.  Since $Q_{\tauz}\in \R$, a direct calculation, mirroring the proof of \eqref{eqn:thetaid1} and using \eqref{eqn:Qtilderef}, yields
$$
\xi_{\frac{3}{2}-k,\ztau}\left(\Thneg(-\overline{\tauz},\ztau)\right) = v^{\frac{1}{2}} \sum_{\substack{D\in \Z\\ Q\in \QD}} Q_{\tauz}Q(\tauz,1)^{k-1}e^{-4\pi Q_{\tauz}^2 v}e^{2\pi i D\ztau}\left( k - \frac{4\pi v}{y^2}\left|Q(\tauz,1)\right|^2\right).
$$
We next obtain \eqref{eqn:xiThtauTh*z} by showing that $-i y^{2k}\frac{\partial}{\partial \tauz}\Thpos\left(\tauz,\ztau\right)$ equals
\begin{multline*}
-iv^{\frac{1}{2}}y^{2}\sum_{\substack{D\in \Z\\ Q\in \QD}} Q(\tauz,1)^{k-1} e^{-4\pi Q_{\tauz}^2v} e^{2\pi i D\ztau} \left(k\frac{\partial}{\partial \tauz} \left(y^{-2}Q\left(\tauz,1\right)\right)  -8\pi Q_{\tauz} y^{-2}Q\left(\tauz,1\right) v\frac{\partial}{\partial \tauz}Q_{\tauz}\right)\\
=v^{\frac{1}{2}} \sum_{\substack{D\in \Z\\ Q\in \QD}}Q_{\tauz} Q(\tauz,1)^{k-1} e^{-4\pi Q_{\tauz}^2v} e^{2\pi i D\ztau}\left(k-\frac{4\pi v}{y^2}\left|Q(\tauz,1)\right|^2\right),
\end{multline*}
where in the last line we have used 
$$
y^2\frac{\partial}{\partial \tauz}Q_{\tauz} = \frac{i}{2} Q(\overline{\tauz},1)\qquad \text{and}\qquad y^2\frac{\partial}{\partial \tauz}\left(y^{-2}Q\left(\tauz,1\right)\right) =i Q_{\tauz}.
$$
\end{proof}

The following lemma relates the regularized inner products in positive and negative weight through the $\xi$-operator.
\begin{lemma}\label{lem:xiTh}
Suppose that $D>0$ is a discriminant and $\tauz\notin E_D$.  Then for every $s$ with $\re(s)\geq \frac{k}{2}+\frac{1}{4}$ one has
\begin{equation}\label{eqn:xicommuteTh}
\left< \xi_{k+\frac{1}{2}}\left(\PP_{k+\frac{1}{2},s,D}\right), \Thneg\left(-\overline{\tauz},\cdot\right) \right>^{\text{reg}} = -\overline{\left< \PP_{k+\frac{1}{2},s,D}, \xi_{\frac{3}{2}-k}\left(\Thneg\left(-\overline{\tauz},\cdot\right)\right)\right>^{\text{reg}}}
\end{equation}
and
\begin{equation}\label{eqn:xicommuteTh*}
\left< \xi_{\frac{3}{2}-k}\left(\PP_{\frac{3}{2}-k,s,D}\right), \Thpos(\tauz,\cdot) \right>^{\text{reg}} = -\overline{\left< \PP_{\frac{3}{2}-k,s,D}, \xi_{k+\frac{1}{2}}\left(\Thpos(\tauz,\cdot)\right)\right>^{\text{reg}}}.
\end{equation}
\end{lemma}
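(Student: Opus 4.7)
The plan is to derive both identities from Stokes's theorem (Lemma \ref{lem:Stokes}) and reduce each to showing that a boundary integral vanishes in the limit $\bigT\to\infty$. Since \eqref{eqn:xicommuteTh} and \eqref{eqn:xicommuteTh*} are structurally parallel, I sketch only the argument for \eqref{eqn:xicommuteTh}; the other follows by interchanging the roles of the Poincar\'e series and the theta function.

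First I apply Lemma \ref{lem:Stokes} with $\Gamma=\Gamma_0(4)$, $\wt=\frac{3}{2}-k$, $F=\PP_{k+\frac{1}{2},s,D}$, and $G=\Thneg(-\overline{\tauz},\cdot)$. The hypotheses $F|_{2-\wt}\gamma=F$ and $G|_{\wt}\gamma=G$ hold by construction and by Proposition \ref{prop:Thetamodular}(2), respectively, and both functions are real analytic on $\H$. After dividing the resulting identity by $[\SL_2(\Z):\Gamma_0(4)]=6$ and letting $\bigT\to\infty$, the two integrals on the left of \eqref{eqn:xicommute prepare} become, respectively, $\left<\xi_{k+\frac{1}{2}}(\PP_{k+\frac{1}{2},s,D}),\Thneg(-\overline{\tauz},\cdot)\right>^{\text{reg}}$ and $\overline{\left<\PP_{k+\frac{1}{2},s,D},\xi_{\frac{3}{2}-k}(\Thneg(-\overline{\tauz},\cdot))\right>^{\text{reg}}}$. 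Thus \eqref{eqn:xicommuteTh} reduces to the claim that
$$
\lim_{\bigT\to\infty}\int_{\partial\mathcal{F}_{\bigT}(\Gamma_0(4))}\overline{\PP_{k+\frac{1}{2},s,D}(\ztau)\,\Thneg(-\overline{\tauz},\ztau)}\,d\overline{\ztau}=0.
$$

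I split $\partial\mathcal{F}_{\bigT}(\Gamma_0(4))$ into interior edges identified in pairs by elements of $\Gamma_0(4)$ and ``top'' arcs approaching the three cusps $\infty$, $0$, and $\frac12$. The product $\PP\cdot\Thneg(-\overline{\tauz},\cdot)$ has total weight $(k+\frac12)+(\frac32-k)=2$ in $\ztau$, and since both factors live in Kohnen's plus space the combined multiplier on $\Gamma_0(4)$ is trivial, so the form $\overline{\PP\cdot\Thneg}\,d\overline{\ztau}$ is $\Gamma_0(4)$-invariant and the identified interior edges cancel in pairs. Using coset representatives for $\Gamma_0(4)\backslash\SL_2(\Z)$, each top arc can be pulled back to the horizontal segment $\{u+i\bigT:|u|\leq\frac12\}$ at the cusp $\infty$, where one has a contour integral in $u$ to estimate.

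The main obstacle is the pointwise estimate of the integrand on these segments. For $\tauz\notin E_D$ held fixed, I expect that the Gaussian factors $e^{-4\pi|Q(\tauz,1)|^2 v/y^2}$ in $\Thneg(-\overline{\tauz},\ztau)$ provide exponential decay in $v$ at a rate depending on $\tauz$, while the Whittaker seed $\psi_{D,k+\frac12}(s;\ztau)$ underlying $\PP_{k+\frac{1}{2},s,D}$ grows at worst like $e^{2\pi Dv}$ times a polynomial (cf.\ \eqref{eqn:Whittasympt} and \eqref{eqn:Poincgrowth}). The technical heart of the proof is to show that after summing over $Q$ in the theta series the Gaussian decay uniformly dominates this growth, and that the same control transfers to the expansions of $\Thneg$ at the cusps $0$ and $\frac12$ via Proposition \ref{prop:Thetamodular}(2). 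Once this is established, the integrand decays exponentially in $\bigT$ on each top arc, so the boundary integral vanishes in the limit and both \eqref{eqn:xicommuteTh} and, by the analogous argument, \eqref{eqn:xicommuteTh*} follow.
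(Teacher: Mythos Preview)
Your approach is the same as the paper's: apply Lemma~\ref{lem:Stokes}, cancel the interior edges by weight-$2$ invariance, and reduce to the horizontal segments at the cusps, with the other cusps handled via the plus-space property. The paper proceeds exactly this way.

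The one piece you have flagged as ``the technical heart'' but not actually carried out is precisely where the hypothesis $\tauz\notin E_D$ enters, and you should make this explicit. After integrating over $u\in[0,1]$, only the $D$-th Fourier coefficient of $\Thneg(-\overline{\tauz},\cdot)$ pairs against the Whittaker seed. Using \eqref{eqn:Qrewrite} to write $|Q(-\overline{\tauz},1)|^2/y^2=Q_{-\overline{\tauz}}^2+D$, the surviving term is (up to harmless factors)
\[
\mathcal{M}_{k+\frac12,s}(4\pi D\bigT)\,\bigT^{k}\sum_{Q\in\QD}Q_{-\overline{\tauz}}\,Q(-\overline{\tauz},1)^{k-1}e^{-4\pi Q_{-\overline{\tauz}}^2\bigT-2\pi D\bigT}.
\]
By \eqref{eqn:Whittasympt} the Whittaker factor contributes $e^{2\pi D\bigT}\bigT^{-k-\frac12}$, so the net exponent is $-4\pi Q_{-\overline{\tauz}}^2\bigT$. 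The assumption $\tauz\notin E_D$ (hence $-\overline{\tauz}\notin E_D$) is exactly what guarantees $Q_{-\overline{\tauz}}\neq 0$ for every $Q\in\QD$, giving genuine exponential decay termwise and in the sum. Without this step the balance between the $e^{2\pi Dv}$ growth and the Gaussian is not visible, and your sketch does not yet explain why the hypothesis on $\tauz$ is needed at all.
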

\begin{proof}
Note that all of the regularized integrals exist, as will be shown in the proofs of Theorem \ref{thm:poswt} (2) and \ref{thm:negwt} (2). We begin with the proof of \eqref{eqn:xicommuteTh} and abbreviate $P:=\PP_{k+\frac{1}{2},s,D}$.  By Lemma \ref{lem:Stokes}, we have
$$
\left< \xi_{k+\frac{1}{2}}\left(P\right), \Thneg\left(-\overline{\tauz},\cdot\right) \right>^{\text{reg}}+ \overline{\left< P, \xi_{\frac{3}{2}-k}\left(\Thneg\left(-\overline{\tauz},\cdot\right)\right)\right>^{\text{reg}}}=-\frac{1}{6}\lim_{\bigT\to\infty} \int_{\partial \mathcal{F}_{\bigT}(4)} \overline{P(\ztau)\Thneg\left(-\overline{\tauz},\ztau\right)}d\overline{\ztau},
$$
provided that the limit exists.  Hence our goal is to show that the limit on the right hand side is zero.  
A standard argument reduces this claim to showing that
\begin{equation}\label{eqn:StokesRemainTh}
\lim_{\bigT\to\infty} \int_{0}^{1} P(u+i\bigT)\Thneg\left(-\overline{\tauz},u+i\bigT\right) du =0
\end{equation}
as well as vanishing of similar integrals around the other cusps of $\Gamma_0(4)$.  However, since both $P$ and $\Thneg$ are in Kohnen's plus space, the vanishing of the corresponding integrals at the other cusps may be reduced to showing that \eqref{eqn:StokesRemainTh} vanishes.

In order to prove \eqref{eqn:StokesRemainTh}, we first recall the growth condition \eqref{eqn:Poincgrowth} and note that $\Thneg\left(-\overline{\tauz},u+i\bigT\right)$ decays exponentially as $\bigT\to \infty$.  Indeed, using \eqref{eqn:Qrewrite}, one can show that for fixed $\tauz\in \H$ the quadratic form
$$
Q^*(a,b,c):=D-\frac{2|Q(\tauz,1)|^2}{y^2}=-D+2Q_{\tauz}^2
$$
 is positive definite on the lattice of all binary quadratic forms $Q=[a,b,c]\in \QD$.  After evaluating the integral over $u$, one reduces \eqref{eqn:StokesRemainTh} to showing that
$$
\lim_{\bigT\to \infty} R_{\bigT}=0,
$$
where 
$$
R_{\bigT}:=\mathcal{M}_{k+\frac{1}{2},s}\left(4\pi D\bigT \right)\bigT^k\sum_{Q\in \QD} Q_{-\overline{\tauz}}Q(-\overline{\tauz},1)^{k-1}e^{-\frac{4\pi\left|Q\left(-\overline{\tauz},1\right)\right|^2 \bigT }{y^2}}e^{2\pi D\bigT}.
$$
However, the asymptotic behavior  for the Whittaker function coming from \eqref{eqn:M1F1} and \eqref{eqn:Whittasympt} yields
$$
\mathcal{M}_{k+\frac{1}{2},s}\left(4\pi D\bigT\right)\ll_{k,s,D} e^{2\pi D\bigT} \bigT^{-k-\frac{1}{2}}.
$$
Using \eqref{eqn:Qrewrite}, we may hence bound
$$
R_{\bigT} \ll_{k,s,D} \bigT^{-\frac{1}{2}} \sum_{Q\in \QD} Q_{-\overline{\tauz}}Q(-\overline{\tauz},1)^{k-1}e^{-4\pi Q_{-\overline{\tauz}}^2 \bigT}.
$$
Since $\tauz\notin E_D$ (and hence $-\overline{\tauz}\notin E_D$), $Q_{-\overline{\tauz}}^2>0$ for every $Q\in \QD$ and hence $R_{\bigT}$ exhibits exponential decay as $\bigT\to \infty$.  This concludes \eqref{eqn:StokesRemainTh}, yielding \eqref{eqn:xicommuteTh}.  The proof of \eqref{eqn:xicommuteTh*} follows analogously. 

\end{proof}

\section{Image of the theta lift $\Phipos$}\label{sec:Phikimage}
In this section, we introduce a spectral parameter in the classical Shintani lift.

\begin{proof}[Proof of Theorem \ref{thm:poswt} (2)]
In order to compute the regularized inner product, we use a method of Zagier \cite{ZagierND}.  He defined a regularization which he used for functions which grow at most polynomially, but the method may be extended to the functions of interest here, as we now describe.  We first define
$$
\H_{\bigT}:=\bigcup_{\gamma\in \SL_2(\Z)}\gamma\mathcal{F}_{\bigT}=\bigcup_{\gamma\in \Gamma_0(4)}\gamma\mathcal{F}_{\bigT}(4).
$$
We first use \eqref{eqn:prherm} together with the fact that $\Thpos=\Thpos|\pr$ to compute
\begin{equation}\label{eqn:Poincherm}
\left<\PP_{k+\frac{1}{2},s,D},\Thpos(\tauz,\cdot)\right>^{\text{reg}} = \left<\PP_{k+\frac{1}{2},s,\Gamma_0(4),D}\Big|\pr,\Thpos(\tauz,\cdot)\right>^{\text{reg}}  = \left<\PP_{k+\frac{1}{2},s,\Gamma_0(4),D},\Thpos(\tauz,\cdot)\right>^{\text{reg}}.
\end{equation}
Then the usual unfolding argument yields
$$
\left<\PP_{k+\frac{1}{2},s,D},\Thpos(\tauz,\cdot)\right>^{\text{reg}} =\frac{1}{6}\lim_{\bigT\to\infty}\int_{\Gamma_{\infty}\backslash\H_{\bigT}} \psi_{D,k+\frac{1}{2}}\left(s;\ztau\right)  \overline{\Thpos\left(\tauz,\ztau\right)}v^{k+\frac{1}{2}}\frac{dudv}{v^2}.
$$
We now rewrite
\[
\H_{\bigT}=\left\{\ztau\in\field{H}\Big|\text{Im}(\ztau)\leq \bigT\right\}\Big\backslash\mathop{\bigcup}_{\substack {c\geq 1\\ a\in\Z\\ (a, c)=1}} S_{\frac{a}{c}}(\bigT),
\]
where $S_{\frac{a}{c}}(\bigT)$ is the disc of radius $\frac{1}{2c^2\bigT}$ tangent to the real axis at $\frac{a}{c}$.  Hence, we have
$$
\left<\PP_{k+\frac{1}{2},s,D},\Thpos(\tauz,\cdot)\right>^{\text{reg}} =\lim_{\bigT\to\infty}\left(I_1(\bigT) + I_2(\bigT)\right),
$$
where
\begin{align*}
I_1(\bigT)&:=\frac{1}{6}\int_0^{\bigT} \int_0^1 \psi_{D,k+\frac{1}{2}}\left(s;\ztau\right) \overline{\Thpos\left(\tauz,\ztau\right)}v^{k+\frac{1}{2}}\frac{dudv}{v^2},\\
I_2(\bigT)&:=-\frac{1}{6}\sum_{c\geq 1}\sum_{\substack{a\; (\text{mod} \; c)\\ (a,c)=1}}\int_{S_{\frac{a}{c}}(\bigT)}\psi_{D,k+\frac{1}{2}}\left(s;\ztau\right)\overline{\Thpos\left(\tauz,\ztau\right)} v^{k+\frac{1}{2}}\frac{dudv}{v^2}.
\end{align*}
We first consider $I_1(\bigT)$.  Evaluating the integral over $u$ and using \eqref{eqn:Qrewrite}, we obtain
\begin{multline}\label{eqn:unfoldfkDs}
\lim_{\bigT\to\infty} I_1(\bigT)=\frac{\left(4\pi D\right)^{\frac{k}{2}+\frac{1}{4}}}{6y^{2k}\Gamma(2s)} \sum_{Q\in Q_D}Q\left(\overline{\tauz},1\right)^k\int_0^\infty e^{-2\pi Dv}\mathcal{M}_{k+\frac{1}{2},s}(4\pi Dv) e^{-4\pi Q_{\tauz}^2 v}v^{k-1}dv\\
=\frac{\left(4\pi D\right)^{\frac{1}{4}-\frac{k}{2}}}{6y^{2k}\Gamma(2s)}\sum_{Q\in \QD}Q\left(\overline{\tauz},1\right)^{k}  I\left(\frac{Dy^2}{\left|Q(\ztau,1)\right|^2}\right),
\end{multline}
where for $0<w<1$ we define
$$
I(w):=\int_0^\infty \mathcal{M}_{k+\frac{1}{2},s}(v)e^{\frac{v}{2}}e^{-vw^{-1}}v^{k-1}dv.
$$
In the case that $s\neq \frac{k}{2}+\frac{1}{4}$, we insert the definition \eqref{eqn:Mdef} of $\mathcal{M}_{k+\frac{1}{2},s}(v)$ and then substitute the integral representation \eqref{eqn:Mintrep} of the $M$-Whittaker function when $\re(s)>\frac{k}{2}$.  The change of variables $t\to 1-t$ yields
\begin{multline*}
I(w)=\frac{\Gamma(2s)}{\Gamma\left(s+\frac{k}{2}+\frac14\right)\Gamma\left(s-\frac{k}{2}-\frac14\right)}\int_0^1(1-t)^{s+\frac{k}{2}-\frac34}t^{s-\frac{k}{2}-\frac{5}{4}}\int_0^\infty v^{s+\frac{k}{2}-\frac{5}{4}}e^{-v\left(w^{-1}-t\right)}dv dt\\
=\frac{\Gamma(2s)\Gamma\left(s+\frac{k}{2}-\frac{1}{4}\right)w^{s+\frac{k}{2}-\frac{1}{4}}}{\Gamma\left(s+\frac{k}{2}+\frac14\right)\Gamma\left(s-\frac{k}{2}-\frac14\right)} \int_0^1(1-t)^{s+\frac{k}{2}-\frac34}t^{s-\frac{k}{2}-\frac54}\left(1-wt\right)^{-\frac{k}{2}-s+\frac14}dt.
\end{multline*}
We then rewrite the integral using the Euler integral representation for ${_2F_1}$ (see (15.3.1) in \cite{AS}), given for $\re(C)>\re(B)>0$ and $|w|<1$ by 
\begin{equation}\label{eqn:Eulerint}
{_2F_1}\left(A,B;C;w\right) = \frac{\Gamma(C)}{\Gamma(B)\Gamma(C-B)}\int_0^1 t^{B-1}(1-t)^{C-B-1}(1-wt)^{-A} dt.
\end{equation}
Thus 
$$
I(w)=\Gamma\left(s+\frac{k}{2}-\frac{1}{4}\right) w^{s+\frac{k}{2}-\frac{1}{4}} {_2F_1}\left(s+\frac{k}{2}-\frac{1}{4},s-\frac{k}{2}-\frac{1}{4};2s;w\right).
$$
Inserting this into \eqref{eqn:unfoldfkDs} shows that $\lim_{\bigT\to\infty}I_1(\bigT)=f_{k,s,D}$.  

For $s=\frac{k}{2}+\frac{1}{4}$, inserting \eqref{eqn:M1F1} into \eqref{eqn:unfoldfkDs} yields
\begin{equation}\label{eqn:Phikhol}
\lim_{\bigT\to\infty} I_1(\bigT)=\frac{D^{\frac{k}{2}+\frac{1}{4}}\Gamma(k)}{6(4\pi)^{\frac{k}{2}-\frac{1}{4}}\Gamma\left(k+\frac{1}{2}\right)}\sum_{Q\in \QD}Q(\tauz,1)^{-k}=f_{k,\frac{k}{2}+\frac{1}{4},D}(\tauz).
\end{equation}

To conclude \eqref{eqn:thetalift1}, it remains to show that $I_2(\bigT)$ vanishes as $\bigT\to\infty$.  We first assume that $4\mid c$ and choose $\gamma=\left(\begin{smallmatrix}a&b\\c&d\end{smallmatrix}\right)\in \Gamma_0\left(4\right)$.  A direct calculation shows that
$$
\gamma S_{\frac{a}{c}} = \left\{\ztau\in \H\Big| v\geq \bigT\right\}.
$$
Hence, the change of variables $\ztau\to \gamma \ztau$, together with the modularity of $\Thpos$ in Proposition \ref{prop:Thetamodular}, yields
$$
I_2(\bigT)=-\frac{1}{6}\int_{\bigT}^{\infty} \int_{-\infty}^{\infty} \overline{\Thpos\left(-\overline{\tauz},\ztau\right)}\left(c\overline{\ztau}+d\right)^{k+\frac{1}{2}}\psi_{D,k+\frac{1}{2}}\left(s;\gamma \ztau\right)\im\left(\gamma \ztau\right)^{k+\frac{1}{2}}\frac{dudv}{v^2}.
$$
Using the facts that $\im\left(\gamma \ztau\right) = \frac{v}{\left|c\ztau+d\right|^2}$ and $\Thpos$ is translation invariant, the integral may be rewritten as
$$
-\frac{1}{6}\int_{\bigT}^{\infty} \int_{0}^{1} \overline{\Thpos\left( -\overline{\tauz},\ztau\right)}\sum_{n=-\infty}^{\infty}\psi_{D,k+\frac{1}{2}}\left(s;\ztau\right)\Big|_{k+\frac{1}{2}}\gamma \left(\begin{matrix}1&n\\0&1\end{matrix}\right) v^{k+\frac{1}{2}}\frac{dudv}{v^2}.
$$
Taking the sum over all $a,c$ with $4\mid c> 0$, the inner sum precisely evaluates as
$$
\PP_{k+\frac{1}{2},s,\Gamma_0(4),D}-\psi_{D,k+\frac{1}{2}}\left(s;\ztau\right).
$$
Comparing the polynomial growth in \eqref{eqn:Poincgrowth} with the exponential decay of $\Thpos\left( -\overline{\tauz},\ztau\right)$ towards $i\infty$, one concludes that the limit $\bigT\to\infty$ vanishes.  A similar argument shows that the contribution to $I_2(\bigT)$ coming from $4\nmid c$ also vanishes as $\bigT\to\infty$.  This yields \eqref{eqn:thetalift1}.

\end{proof}

\section{Image of the theta lift $\Phineg$}\label{sec:Phi1-kimage}
We next compute the image of $\Phineg$ with the method from Section \ref{sec:Phikimage}.
\begin{proof}[Proof of Theorem \ref{thm:negwt} (2)]
Following the argument in the proof of Theorem \ref{thm:poswt} (2), we may reduce the theorem to evaluating 
\begin{multline}\label{Zagiersplit2}
\frac{1}{6}\lim_{\bigT\to\infty}\int_0^{\bigT} \int_0^1 \psi_{D,\frac{3}{2}-k}\left(s;\ztau\right) \overline{\Thneg\left(-\overline{\tauz},\ztau\right)}v^{\frac32-k}\frac{dudv}{v^2}\\
-\frac{1}{6}\lim_{\bigT\to\infty}\sum_{c\geq 1}\sum_{\substack{a\; (\text{mod} \; c)\\ (a,c)=1}}\int_{S_{\frac{a}{c}}(\bigT)}\psi_{D,\frac{3}{2}-k}\left(s;\ztau\right)\overline{\Thneg\left(-\overline{\tauz},\ztau\right)} v^{\frac32-k}\frac{dudv}{v^2}.
\end{multline}
Using the argument from before, the second summand vanishes.  We use \eqref{eqn:Qrewrite} to rewrite the exponential in the theta series as
$$
\left(b^2-4ac\right)u+iv\left(2Q_{-\overline{\tauz}}^2+\left(b^2-4ac\right)\right).
$$
Therefore, evaluating the integral over $u$ and then making the change of variables $Q\to \widetilde{Q}$ (as defined before \eqref{eqn:Qtilderef}), it suffices to compute
\begin{equation}\label{sintegral}
\frac{1}{6}(4\pi D)^{\frac{1}{4}-\frac{k}{2}}\Gamma(2s)^{-1}\sum_{Q\in \QD} Q_{\tauz} Q(\tauz,1)^{k-1}\mathcal{I}\left(\frac{Dy^2}{|Q(\tauz,1)|^2}\right),
\end{equation}
where
$$
\mathcal{I}(w):=\int_0^\infty\mathcal{M}_{\frac{3}{2}-k,s}(-v)\, e^{\frac{v}{2}}\,v^{-\frac12}\,e^{-vw^{-1}}dv.
$$

Inserting the definition \eqref{eqn:Mdef} of $\mathcal{M}_{\frac{3}{2}-k,s}$ and the integral representation \eqref{eqn:Mintrep} of the $M$-Whittaker function, we evaluate
\begin{multline}\label{eqn:neglift}
\mathcal{I}(w)=\frac{\Gamma(2s)}{\Gamma\left(s-\frac{k}{2}+\frac{3}{4}\right)\Gamma\left(s+\frac{k}{2}-\frac{3}{4}\right)}\int_0^1 t^{s+\frac{k}{2}-\frac74}(1-t)^{s-\frac{k}{2}-\frac14}\,\int_0^\infty v^{s+\frac{k}{2}-\frac{5}{4}}e^{-v\big(t-1+w^{-1}\big)}dvdt\\
 =\frac{\Gamma(2s)\Gamma\left(s+\frac{k}{2}-\frac14\right)}{\Gamma\left(s-\frac{k}{2}+\frac{3}{4}\right)\Gamma\left(s+\frac{k}{2}-\frac{3}{4}\right)}w^{s+\frac{k}{2}-\frac{1}{4}} \int_0^1(1-t)^{s+\frac{k}{2}-\frac74} t^{s-\frac{k}{2}-\frac14}\left(1-wt\right)^{-s-\frac{k}{2}+\frac{1}{4}}dt. 
\end{multline}
We again employ the Euler integral representation \eqref{eqn:Eulerint} to show that 
$$
\mathcal{I}(w)=\Gamma\left(s+\frac{k}{2}-\frac14\right)w^{s+\frac{k}{2}-\frac{1}{4}}{_2F_1}\left(s+\frac{k}{2}-\frac{1}{4},s-\frac{k}{2}+\frac{3}{4};2s;w\right).
$$
We then rewrite the hypergeometric function by using the Euler transform
$$
{_2F_1}\left(A,B;C;w\right)=\left(1-w\right)^{C-A-B}{_2F_1}\left(C-A,C-B;C;w\right)
$$
to yield
$$
\mathcal{I}(w)=\Gamma\left(s+\frac{k}{2}-\frac14\right)(1-w)^{-\frac{1}{2}}w^{s+\frac{k}{2}-\frac{1}{4}}{_2F_1}\left(s-\frac{k}{2}+\frac{1}{4},s+\frac{k}{2}-\frac{3}{4};2s;w\right).
$$
Finally, we conclude that \eqref{sintegral} equals \eqref{eqn:thetalift1} by using \eqref{eqn:Qrewrite} to rewrite $\left|Q_{\tauz}\right|$ in terms of $\frac{Dy^2}{|Q(\tauz,1)|^2}$. 

\end{proof}

We next establish the injectivity of the lift.
\begin{proof}[Proof of Theorem \ref{thm:Heckeneg} (2)]
Since the Poincar\'e series $\PP_{\frac{3}{2}-k,s,D}$ span the space of weak Maass forms and are linearly independent (which can be seen by comparing their principal parts), it is enough to show that the functions $\mathcal{F}_{1-k,s,D}$ are linearly independent.
This follows by proving that any linear combination
$$
\mathcal{F}:=\sum_{j=1}^{n} a_j \mathcal{F}_{1-k,s,D_j}
$$
with $a_j$ not all zero exhibits discontinuities and is hence nonzero.  Comparing the sets $E_{D_j}$ of geodesics defined in \eqref{eqn:EDdef} implies the result.
\end{proof}

\section{Relation between positive and negative weight local Maass forms}\label{sec:negposwt}

In this section we relate $f_{k,s,D}$ and $\mathcal{F}_{1-k,s,D}$.
\begin{proof}[Proof of Theorem \ref{thm:negposrel}]
We prove \eqref{eqn:xiFs} by establishing that for $P:=\PP_{\frac{3}{2}-k,s,D}$ and $z\notin E_D$, one has 
\begin{equation}\label{eqn:xiFs2}
\xi_{2-2k}\left(\Phineg\left(P\right)(\tauz)\right)= 2\Phipos\left(\xi_{\frac{3}{2}-k}(P)\right)(\tauz).
\end{equation}
We first use \eqref{eqn:xicommuteTh*} and then \eqref{eqn:thetaid1} to obtain for $z\notin E_D$
\begin{multline}\label{eqn:Th*zThtau}
\Phipos\left(\xi_{\frac{3}{2}-k}(P)\right)(\tauz) = \left< \xi_{\frac{3}{2}-k}\left(P\right), \Thpos\left(\tauz,\cdot\right)\right>^{\text{reg}} =-\overline{\left<P, \xi_{k+\frac{1}{2}}\left(\Thpos\left(\tauz,\cdot\right)\right)\right>^{\text{reg}}}\\
=-\overline{\left< P,-i y^{2-2k}\frac{\partial}{\partial\tauz}\Thneg\left(-\overline{\tauz},\cdot\right)\right>^{\text{reg}}}=\frac{iy^{2-2k}}{6}\frac{\partial}{\partial\tauz}\int_{\mathcal{F}_0(4)}^{\text{reg}} \overline{P(\ztau)} \Thneg\left(-\overline{\tauz},\ztau\right)v^{\frac{3}{2}-k} \frac{dudv}{v^2}\\
 =iy^{2-2k}\frac{\partial}{\partial\tauz}\overline{\left< P,\Thneg\left(-\overline{\tauz},\cdot\right)\right>^{\text{reg}}}.
\end{multline}
Since
\begin{equation}\label{eqn:xidef2}
\xi_{\wt}\left(G(\tauz)\right) = 2iy^{\wt}\frac{\partial}{\partial{\tauz}}\overline{G(\tauz)},
\end{equation}
we conclude \eqref{eqn:xiFs2} from \eqref{eqn:Th*zThtau}. 

We now apply Theorem \ref{thm:negwt} (2), \eqref{eqn:xiFs2}, \eqref{eqn:xiPs}, and finally Theorem \ref{thm:poswt} (2) to yield
\begin{multline*}
\xi_{2-2k}\left(\mathcal{F}_{1-k,s,D}(\tauz)\right)= \xi_{2-2k}\left(\Phineg\left(\PP_{\frac{3}{2}-k,s,D}\right)(\tauz)\right)= 2\Phipos\left(\xi_{\frac{3}{2}-k}\left(\PP_{\frac{3}{2}-k,s,D}\right)\right)(\tauz)\\
=2\left(\overline{s}-\frac{3}{4}+\frac{k}{2}\right) \Phipos\left(\PP_{k+\frac{1}{2},\overline{s},D}\right)(\tauz)= 2\left(\overline{s}-\frac{3}{4}+\frac{k}{2}\right) f_{k,\overline{s},D}(\tauz).
\end{multline*}
This concludes the proof of \eqref{eqn:xiFs}.

We next prove \eqref{eqn:xi2k}.  Denoting $P:=\PP_{k+\frac{1}{2},s,D}$, we use \eqref{eqn:xicommuteTh} to conclude that for $z\notin E_D$
\begin{equation}\label{eqn:xiswitch2}
\Phineg\left(\xi_{k+\frac{1}{2}}(P)\right)(\tauz)=\left< \xi_{k+\frac{1}{2}}\left(P\right), \Thneg\left(-\overline{\tauz},\cdot\right)\right>^{\text{reg}} = -\overline{\left< P, \xi_{\frac{3}{2}-k}\left(\Thneg\left(-\overline{\tauz},\cdot\right)\right)\right>^{\text{reg}}}.
\end{equation}
We then employ \eqref{eqn:xiThtauTh*z} and \eqref{eqn:xidef2} to obtain
\begin{equation}\label{eqn:Phi1-kPhik}
\Phineg\left(\xi_{k+\frac{1}{2}}(P)\right)(\tauz)=iy^{2k}\frac{\partial}{\partial \tauz}\overline{ \left< P,\Thpos\left(\tauz,\cdot\right)\right>^{\text{reg}}}= \frac{1}{2}\xi_{2k}\left(\Phipos(P)(\tauz)\right).
\end{equation}
Combining this with Theorem \ref{thm:negwt} (2), \eqref{eqn:xiPs}, and Theorem \ref{thm:poswt} (2) yields
\begin{multline*}
\left(\overline{s}-\frac{k}{2}-\frac{1}{4}\right)\mathcal{F}_{1-k,\overline{s},D}(\tauz)=  \left(\overline{s}-\frac{k}{2}-\frac{1}{4}\right)\Phineg\left(P_{\frac{3}{2}-k,\overline{s},D}\right)(\tauz)\\
=\Phineg\left(\xi_{k+\frac{1}{2}}\left(\PP_{k+\frac{1}{2},s,D}\right)\right)(\tauz)=\frac{1}{2}\xi_{2k}\left(\Phipos\left(\PP_{k+\frac{1}{2},s,D}\right)(\tauz)\right)= \frac{1}{2}\xi_{2k}\left(f_{k,s,D}(\tauz)\right).
\end{multline*}
\end{proof}

We are now ready to prove Theorem \ref{thm:poswt} (1) and Theorem \ref{thm:negwt} (1).
\begin{proof}[Proof of Theorem \ref{thm:poswt} (1)]
Note that 
$$
\overline{\Thpos\left(\tauz,\ztau\right)}=\Thpos\left(-\overline{\tauz},-\overline{\ztau}\right).
$$
Hence $f_{k,s,D}$ is modular of weight $2k$ by Proposition \ref{prop:Thetamodular}.  

The functions $f_{k,s,D}$ are continuous since for  $\re(C)>\re(A+B)$, the hypergeometric function ${_2F_1}\left(A,B;C;w\right)$, and hence $\phipos_s(w)$, is continuous for $w\leq 1$.  This implies condition (3).

For $\tauz\notin E_D$, \eqref{eqn:xi2k} and \eqref{eqn:xiFs} imply that
$$
\Delta_{2k}\left(f_{k,s,D}(\tauz)\right) = -\xi_{2-2k}\left(\xi_{2k}\left(f_{k,s,D}(\tauz)\right)\right) = 4\lambda_s f_{k,s,D}(\tauz).
$$
A straightforward calculation shows that $f_{k,s,D}(\tauz)$ grows at most polynomially as $y\to \infty$.

Finally, one uses \eqref{eqn:Phikhol} and the duplication formula for the $\Gamma$-function to conclude \eqref{eqn:fkDcusp}.

\end{proof}
\begin{remark}
The non-differentiability of $f_{k,s,D}$ follows by using \eqref{eqn:xi2k} and then proving that the functions $\mathcal{F}_{1-k,s,D}$ are not continuous.  Computational evidence indicates that $f_{k,s,D}(\tauz)$ decays exponentially as $y\to\infty$.
\end{remark}

\begin{proof}[Proof of Theorem \ref{thm:negwt} (1)]
Noting that 
$$
\overline{\Thneg\left(-\overline{\tauz},\ztau\right)}=\Thneg\left(\tauz,-\overline{\ztau}\right),
$$
Proposition \ref{prop:Thetamodular} implies that $\mathcal{F}_{1-k,s,D}$ is modular of weight $2-2k$.  

The proof that $\mathcal{F}_{1-k,s,D}$ is an eigenfunction under $\Delta_{2-2k}$ with eigenvalue $4\lambda_s$ follows by \eqref{eqn:xiFs} and \eqref{eqn:xi2k} precisely as in the proof of Theorem \ref{thm:poswt} (1).

In order to show condition (3) in the definition of local Maass forms, we first note that $\phineg_s(w)$ is continuous for $0<w\leq 1$.  The locally uniform convergence of the sum allows us to pull the limit $r\to 0^+$ of $\mathcal{F}_{1-k,s,D}(\tauz\pm ir)$ into each term.  Define 
$$
\mathscr{B}_{\tauz}:=\left\{ Q\in \QD\big| Q_{\tauz}=0\right\}.
$$
By Lemma 5.1 of \cite{BKW}, there are only finitely many $Q\in \mathscr{B}_{\tauz}$.  Note that
$$
\sgn\left(Q_{\tauz}\right)=\sgn\left(Q_{\tauz\pm ir}\right)
$$ 
for $r$ sufficiently small and $Q\notin \mathscr{B}_{\tauz}$, while for $Q\in \mathscr{B}_{\tauz}$ one has
$$
\sgn\left(Q_{\tauz+ ir}\right)=-\sgn\left(Q_{\tauz- ir}\right).
$$
Hence, since the terms of $\mathcal{F}_{1-k,s,D}(\tauz)$ with $Q\in \mathscr{B}_{\tauz}$ vanish,
\begin{multline*}
\frac{1}{2}\lim_{r\to 0^+} \left(\mathcal{F}_{1-k,s,D}(\tauz+ ir)+\mathcal{F}_{1-k,s,D}(\tauz-ir)\right) = \sum_{Q\notin \mathscr{B}_{\tauz}} \sgn\left(Q_{\tauz}\right)Q(\tauz,1)^{k-1}\phineg_s\left(\frac{Dy^2}{|Q(\tauz,1)|^2}\right)\\
=\mathcal{F}_{1-k,s,D}(\tauz).
\end{multline*}

A direct calculation shows that $\mathcal{F}_{1-k,s,D}(\tauz)$ grows at most polynomially as $y\to\infty$.
\end{proof}
\begin{remark}
To show that $\mathcal{F}_{1-k,s,D}$ exhibits discontinuities along the set $E_D$, one computes 
$$
\lim_{r\to 0^+} \left(\mathcal{F}_{1-k,s,D}(\tauz+ ir)-\mathcal{F}_{1-k,s,D}(\tauz-ir)\right)
$$
 similarly as in the proof of Theorem \ref{thm:negwt} (1).  It is shown to be nonzero by using Gauss's summation formula to conclude that $\phineg_s(1)\neq 0$.  

If $D$ is not a square and $\re(s)\geq \frac{k}{2}+\frac{1}{4}$, then computational evidence indicates that $\mathcal{F}_{1-k,s,D}$ is bounded as $y\to\infty$.
\end{remark}

\section{Hecke operators}\label{sec:Hecke}
In this section, we consider the action of the Hecke operators on the theta lifts.
\begin{proof}[Proof of Theorem \ref{thm:Heckeneg} (1)]
Since the Poincar\'e series span the space of weight $\frac{3}{2}-k$ weak Maass forms, it suffices to compute the action of the Hecke operators on Poincar\'e series.  As in the proof of Theorem 1.5 of \cite{BKW}, one can show that
$$
\mathcal{F}_{1-k, s,D}\Big|_{2-2k} T_p=\mathcal{F}_{1-k, s,Dp^2}+p^{-k}\left(\frac{D}{p}\right)
\mathcal{F}_{1-k, s,D}+p^{1-2k}\mathcal{F}_{1-k, s,\frac{D}{p^2}}.
$$
Hence by Theorem \ref{thm:negwt} (1), equation \eqref{eqn:Hecke} follows by the easily verified identity
\[
\PP_{\frac32-k, s,D}\Big|_{\frac{3}{2}-k} T_{p^2}=\PP_{\frac32-k, s,Dp^2}+p^{-k}\left(\frac{D}{p}\right)\PP_{\frac32-k, s,D}+p^{1-2k} \PP_{\frac32-k, s,\frac{D}{p^2}}.
\]
\end{proof}
We now move on to the positive weight case.
\begin{proof}[Proof of Theorem \ref{thm:Heckepos}]
We first prove Theorem \ref{thm:Heckepos} (1).  Let $\mathcal{H}$ be a weight $2k$ local Maass form with exceptional set $E_D$ which is continuous everywhere.  Since continuity is preserved by the Hecke operators, one easily checks that $\mathcal{H}|_{2k}T_p$ is a local Maass form.  To determine the exceptional set for $\mathcal{H}$, recall that the weight $2k$ Hecke operator may be written as
$$
\mathcal{H}\Big|_{2k}T_p\left(\tau\right) = p^{2k-1} \mathcal{H}\left(p\tau\right)+ p^{-1}\sum_{r\pmod{p}} \mathcal{H}\left(\frac{\tau+r}{p}\right).
$$
By computing the image of $E_D$ under $\tau \to p\tau$ and $\tau \to \frac{\tau+r}{p}$, one concludes that $\mathcal{H}|_{2k}T_p$ has exceptional set $E:=E_{Dp^2}\supset E_D$.  Hence it suffices to prove the statement for $\tauz\notin E$.

Suppose that $H$ is a weak Maass form of weight $k+\frac{1}{2}$ with eigenvalue $\lambda_s$ under $\Delta_{k+\frac{1}{2}}$.  Since $\xi_{\frac{3}{2}-k}$ surjects onto the space of weak Maass forms of weight $k+\frac{1}{2}$ with eigenvalue $\lambda_{s}$ (see \cite{BF}), we may choose such a weight $\frac{3}{2}-k$ weak Maass form $G$ such that $\xi_{\frac{3}{2}-k}(G)=H$.  But then by \eqref{eqn:Hecke}, \eqref{eqn:xiFs2}, and the fact that the Hecke operators commute with $\xi_{2-2k}$, for $\tauz\notin E$, we have that
$$
\Phipos\left(H\right)\Big|_{2k}T_{p}(\tauz)=\frac{1}{2} \xi_{2-2k}\left(\Phineg\left(G\right)\right)\Big|_{2k}T_{p}(\tauz)  =\frac{1}{2} \xi_{2-2k}\left(\Phineg\left(G\Big|_{\frac{3}{2}-k}T_{p^2}\right)(\tauz)\right).
$$
We now use \eqref{eqn:Phi1-kPhik} and the fact that $\xi_{\frac{3}{2}-k}$ commutes with the Hecke operators to obtain
$$
\xi_{2-2k}\left(\Phineg\left(G\Big|_{\frac{3}{2}-k}T_{p^2}\right)(\tauz)\right)=2\Phipos\left(\xi_{\frac{3}{2}-k}\left(G\Big|_{\frac{3}{2}-k}T_{p^2}\right)\right)(\tauz)=2\Phipos\left(H\Big|_{k+\frac{1}{2}}T_{p^2}\right)(\tauz),
$$
as desired.

We move on to Theorem \ref{thm:Heckepos} (2).  Assume that $\Phipos(F)\equiv 0$ for a weak Maass form $F$ with eigenvalue $\lambda_s\neq 0$.  Writing $G:=-(4\lambda_s)^{-1} \xi_{k+\frac{1}{2}}(F)$, by \eqref{eqn:xiFs2} we have that
$$
0=\xi_{2-2k}\left(\Phineg(G)\right).
$$
Since $\Phineg(G)$ is an eigenfunction under $\Delta_{2-2k}$ with eigenvalue $4\lambda_s\neq 0$, we have
$$
0= -\left(4\lambda_s\right)^{-1}\xi_{2k}\left(\xi_{2-2k}\left(\Phineg(G)\right)\right)= \Phineg(G).
$$
Since $\Phineg$ is injective, we conclude that $G\equiv 0$.  However,
$$
\xi_{\frac{3}{2}-k}(G)=F,
$$
and hence $F\equiv 0$.
\end{proof}

\end{document}